\documentclass[12pt]{amsart}
\usepackage{newlfont,amsmath,amssymb,enumerate}

\usepackage{amscd}
\usepackage{braket,xcolor}

\setlength{\oddsidemargin}{0.05in}
\setlength{\evensidemargin}{0.05in}
\setlength{\textwidth}{6.35in}
\setlength{\textheight}{8.5in}

\newcommand{\Sym}{{\mathrm{Sym}}}
\newcommand{\symm}{{\mathrm{symm}}}

\newcommand{\bfzero}{{\mathbf{0}}}
\newcommand{\bfone}{{\mathbf{1}}}

\newcommand{\rd}{\mathrm{d}}

\newcommand{\rpp}{t}

\newcommand{\ind}{{\mathrm{ind}}}

\newcommand{\thetaone}{{\theta_p^{n,m}(1)}}

\newcommand{\bbC}{{\mathbb C}}

\newcommand{\bbR}{{\mathbb R}}

\newcommand{\bbZ}{{\mathbb Z}}
\newcommand{\oo}{\widetilde{\mathrm O}}

\newcommand{\rO}{{\mathrm{O}}}
\newcommand{\rU}{{\mathrm{U}}}

\newcommand{\rSp}{{\mathrm{Sp}}}

\newcommand{\SO}{{\mathrm{SO}}}

\newcommand{\frakg}{{\mathfrak{g}}}

\newcommand{\frakn}{{\mathfrak{n}}}

\newcommand{\fraku}{{\mathfrak u}}
\newcommand{\frakt}{{\mathfrak{t}}}
\newcommand{\frakk}{{\mathfrak k}}
\newcommand{\frakl}{{\mathfrak{l}}}

\newcommand{\frakso}{{\mathfrak{so}}}

\newcommand{\frakgl}{{\mathfrak{gl}}}
\newcommand{\frakp}{{\mathfrak p}}
\newcommand{\frakq}{{\mathfrak q}}

\newcommand{\spl}{{\mathrm{Sp}}}

\newcommand{\syp}{\widetilde{\mathrm{Sp}}}

\newcommand{\Hom}{{\mathrm{Hom}}}
\newtheorem{lemma}[subsection]{Lemma}

\newtheorem{prop}[subsection]{Proposition}
\newtheorem{thm}[subsection]{Theorem}

\newcommand{\calB}{{\mathcal{B}}}
\newcommand{\calC}{{\mathcal{C}}}
\newcommand{\calF}{{\mathcal{F}}}
\newcommand{\calH}{{\mathcal{H}}}

\newcommand{\calL}{{\mathcal{L}}}

\newcommand{\rH}{{\mathrm{H}}}

\newcommand{\fgta}{{\calF_{\frakg,K_{n,m}}^{\frakg,K_{n,r,m-r}}}}

\newcommand{\fgonetb}{{\calF_{\frakg_1,K_{n,r}}^{\frakg_1,K_{n,t,r-t}}}}
\newcommand{\fgtc}{\calF_{\frakg_1,K_{n,r}}^{\frakg_1,K^0_{n,r}}}

\begin{document}

\subjclass{22E46, 22E47}

\title[Transfers of $K$-types]{Transfers of $K$-types on local theta
  lifts of characters and unitary lowest weight modules}

\author{Hung Yean Loke}

\author{Jia-jun Ma}

\author{U-Liang Tang}
\address{Department of Mathematics,
National University of Singapore,
2 Science Drive 2, Singapore 117543}

\email{matlhy@nus.edu.sg, jiajunma@nus.edu.sg, g0600577@nus.edu.sg}

\begin{abstract}
  In this paper we study representations of the indefinite orthogonal
  group $\rO(n,m)$ which are local theta lifts of one dimensional
  characters or unitary lowest weight modules of the double covers of
  the symplectic groups. We apply the transfer of $K$-types on these
  representations of $\rO(n,m)$, and we study their effects on the
  dual pair correspondences. These results provide examples that the
  theta lifting is compatible with the transfer of $K$-types.  Finally
  we will use these results to study subquotients of some
  cohomologically induced modules.
\end{abstract}

\keywords{local theta lifts, Zuckerman functors, transfer of $K$-types.}

\maketitle

\section{Introduction} \label{S1} In this paper we study
representations of the indefinite orthogonal group $\rO(n,m)$ which
are local theta lifts of one dimensional characters or
unitary lowest weight modules of the double covers of the symplectic
groups. We apply the transfer of $K$-types on these
representations of $\rO(n,m)$, and we study their effects on the dual
pair correspondences. Finally we will use these results to study
subquotients of some cohomologically induced modules. Our methods
could also be applied to $\rU(n,m)$ and~$\rSp(n,m)$.

\smallskip

We introduce the Harish-Chandra module $\thetaone$ of $\rO(n,m)$.
Let $\syp(p(n+m),\bbR)$ be the metaplectic double cover of
$\spl(p(n+m),\bbR)$. It contains a dual pair $\left(\syp(p,\bbR),
\oo(n,m)\right)$. Here $\syp(p,\bbR)$ and $\oo(n,m)$ are (possibly split)
double covers of $\spl(p,\bbR)$ and $\rO(n,m)$ respectively.  A
Harish-Chandra module of $\syp(p,\bbR)$ (resp.  $\oo(n,m)$) is called
genuine if it does not factor through the linear group
$\spl(p,\bbR)$ (resp. $\rO(n,m)$).

We will always assume that $\syp(p,\bbR)$ splits over $\spl(p,\bbR)$,
i.e. $\syp(p,\bbR) \simeq \spl(p,\bbR) \times (\bbZ/ 2 \bbZ)$.  This
happens if and only if $m + n$ is even. Let $\varsigma'$ denote the
genuine one dimensional character of $\syp(p,\bbR)$ which is trivial
on $\spl(p,\bbR)$ and nontrivial on $\bbZ/ 2 \bbZ$. We fix an
oscillator representation of $\syp(p(n+m),\bbR)$ and we let $U_0$
denote the local theta lift of $\varsigma'$ to $\oo(n,m)$ with respect
to this oscillator representation \cite{H2}.  The module $U_0$ is an
irreducible Harish-Chandra module of $\oo(n,m)$. The group $\oo(n,m)$
splits over $\rO(n,m)$ if and only if $p$ is even. Since $\oo(n,m)$ has 8 connected components and it has four genuine
one dimensional characters.  We will explain a choice of a genuine
character $\varsigma$ in the paragraph before \eqref{eq6}.  We set
$\thetaone = \varsigma U_0$ which is an irreducible
Harish-Chandra module of $\rO(n,m)$.  We will call $\thetaone$
the theta lift of the trivial character of $\spl(p,\bbR)$.

We say that the dual pair $\left(\syp(p,\bbR), \oo(n,m)\right)$ is in the
{\it stable range} if $2p \leq \min(n,m)$ and $2p < \max(n,m)$. Our
definition excludes the case $m = n = 2p$. For simplicity, we will say
that $(p,n,m)$ is in stable range. By \cite{Li}, the Harish-Chandra
module $\thetaone$ is nonzero and unitarizable in the stable
range.

Let $K_{n,m} = \rO(n) \times \rO(m)$ denote a maximal compact subgroup
of $\rO(n,m)$.  The $K_{n,m}$-types of $\thetaone$ are described
in \eqref{eq6}.  It is $K_{n,m}$-multiplicity free and it is both
$\rO(n) \times 1$-admissible and $1 \times \rO(m)$-admissible. Let
$\frakg = \frakso(n+m,\bbC)$, let $K_{n,r,m-r} = \rO(n) \times \rO(r)
\times \rO(m-r)$ be the subgroup of $K_{n,m}$ and let $\fgta
\thetaone$
denote the restriction of $\thetaone$ as a
$(\frakg, K_{n,r,m-r})$-module. We will apply
the transfer of $K$-types due to Enright and Wallach \cite{Wa2}. More
precisely we apply the middle degree  $pr$-th derived functor $\Gamma^{pr} =
(\Gamma_{\frakg,K_{n,r,m-r}}^{\frakg,K_{n+r,m-r}})^{pr}$ of the
Zuckerman functor to $\fgta \thetaone$.  
Since
$K_{n+r,m-r}$ is not connected, special care is necessary. See
Section~\ref{S22} for a discussion. Let $\rho_N$ denote the half sum
of positive roots of $\frakso(N)$ and let $\bfone_p = (1, \ldots, 1)
\in \bbZ^p$. We can now state the first main theorem of this paper.

\begin{thm} \label{T11} 
Suppose $n + m$ is even and $(p,n,m)$ is in the stable range. Let $0 <
r < m$.
\begin{enumerate}[(i)]
\item
If $2p > m-r$, then $\Gamma^{pr}(\fgta
 \thetaone) =  0$.

\item
If $2p \leq m-r$, then
\[
\Gamma^{pr}(\fgta( \thetaone)) =
\theta_p^{n+r,m-r}(1)
\]
as $(\frakso(n+m,\bbC), K_{n+r,m-r})$-modules.

\item We set $N = n + m$. Then every representation in the collection
\[
\{ \theta_p^{a,b}(1) : a + b = N, a \geq 2p, b \geq 2p \}
\]
has the same annihilator ideal in the universal enveloping algebra of
$\frakso(n+m,\bbC)$. The annihilator ideal is the maximal primitive
ideal with infinitesimal character $\rho_{N} - p
\bfone_{N/2}$. See Section \ref{S21} for the notation on
weights and infinitesimal characters.

\item Let $a = a_1 + a_2$ and $b = b_1 + b_2$. Suppose $\theta_p^{a_1+b_1,a_2+b_2}(1)
  \rightarrow \pi_a \boxtimes \pi_b$ is a nontrivial quotient where
  $\pi_a$ and $\pi_b$ are irreducible modules of $(\frakso(a,\bbC), K_{a_1,a_2})$ and $(\frakso(b,\bbC),
  K_{b_1,b_2})$ respectively. Then the infinitesimal characters of
  $\pi_a$ and $\pi_b$ must respect the following correspondence:
\begin{eqnarray}
(\lambda_1, \ldots, \lambda_p, \rho_{a-2p}) \longleftrightarrow
(\lambda_1, \ldots, \lambda_p, \rho_{b-2p}) & & \mbox{ if } b \geq a
\geq 2p, \label{eq1} \\
 (\lambda_1, \ldots, \lambda_s) \longleftrightarrow (\lambda_1,
 \ldots, \lambda_s, \rho_{b-a} - \frac{2p-a}{2} \bfone_{\frac{b-a}{2}})
\label{eq2} & & \mbox{ if } a < 2p
\end{eqnarray}
where $s = [\frac{a}{2}]$.
\end{enumerate}
\end{thm}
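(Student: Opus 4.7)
The plan is to bootstrap from the explicit $K$-type formula for $\thetaone$ and exploit the fact that Zuckerman's derived functors are $Z(\frakg)$-equivariant. For parts (i) and (ii), I begin with the $K_{n,m}$-type decomposition of $\thetaone$ recorded in \eqref{eq6}. Because $\thetaone$ is $1\times\rO(m)$-admissible in the stable range, its restriction as a $(\frakg, K_{n,r,m-r})$-module is a controlled direct sum of isotypic pieces labelled by pairs of $\rO(r)$- and $\rO(m-r)$-types. The middle-degree functor $\Gamma^{pr}$ acts on this decomposition via the Enright--Wallach bottom-layer map, inducing each $\rO(n)\times\rO(r)$-component up to an $\rO(n+r)$-type with the appropriate $\rho$-shift. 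When $2p\leq m-r$, a direct comparison of the resulting $\rO(n+r)\times\rO(m-r)$-type expansion with the $K_{n+r,m-r}$-type formula for $\theta_p^{n+r,m-r}(1)$ (obtained by applying \eqref{eq6} at the shifted parameters) establishes (ii). When $2p > m-r$, the enlarged pair $(\syp(p,\bbR),\oo(n+r,m-r))$ falls outside the stable range on the orthogonal side; the candidate highest weights on $\rO(n+r)$ fail the positivity conditions forced by the Enright--Wallach spectral sequence, so the transfer vanishes, proving (i).

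For part (iii), the strategy is to connect any two members of the family $\{\theta_p^{a,b}(1): a+b=N,\ a\geq 2p,\ b\geq 2p\}$ by iterating (ii). Any admissible pair $(a,b)$ can be joined to any other admissible pair by a chain of one-sided moves $(a,b)\mapsto(a+r,b-r)$ that keeps both entries in the admissible region, and at each step part (ii) identifies the later module as $\Gamma^{pr}$ applied to a restriction of the earlier one. Since Zuckerman functors commute with the action of $Z(\frakg)$, both the infinitesimal character and the annihilator ideal are preserved along the chain. The common infinitesimal character is then computed directly from the $\rho$-shift in the theta correspondence for the trivial character on $\spl(p,\bbR)$, giving $\rho_N - p\bfone_{N/2}$. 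To upgrade ``some primitive ideal with this infinitesimal character'' to ``the maximal primitive ideal'', I would use the associated-variety computation available in the stable range: the associated variety of a theta lift of a one-dimensional representation equals the closure of the prescribed nilpotent orbit in $\frakso(n+m,\bbC)^*$, and this matches the wave front of the maximal primitive ideal with infinitesimal character $\rho_N - p\bfone_{N/2}$, pinning down the annihilator.

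Part (iv) is then a consequence of (iii) via restriction of central characters along $\frakso(a,\bbC)\oplus\frakso(b,\bbC)\hookrightarrow\frakso(N,\bbC)$. Given a nontrivial quotient $\theta_p^{a_1+b_1,a_2+b_2}(1) \rightarrow \pi_a\boxtimes\pi_b$, the $\frakso(N,\bbC)$-infinitesimal character is $\rho_N - p\bfone_{N/2}$ by (iii), and its image in $Z(\frakso(a,\bbC))\otimes Z(\frakso(b,\bbC))$ constrains the Weyl-group orbit of the pair $(\chi_{\pi_a},\chi_{\pi_b})$. In the generic case $a,b\geq 2p$, the $N/2$ parameters of $\rho_N - p\bfone_{N/2}$ split as a set of $p$ shared entries $\lambda_1,\ldots,\lambda_p$ together with the complementary $\rho_{a-2p}$ on the $\pi_a$ side and $\rho_{b-2p}$ on the $\pi_b$ side, yielding \eqref{eq1}. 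When $a<2p$, the $\pi_a$ side can only absorb $s=[a/2]$ parameters; the remaining parameters accumulate on the $\pi_b$ side, and absorbing the excess $(2p-a)/2$ of the $-p$ shift produces the tail $\rho_{b-a}-\frac{2p-a}{2}\bfone_{\frac{b-a}{2}}$ in \eqref{eq2}.

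I expect the principal technical obstacle to lie in the disconnectedness of the relevant $K$-groups: $K_{n+r,m-r}$ has four components and $\oo(n,m)$ has eight, so the choice of genuine character $\varsigma$ and the covering normalization must be propagated consistently through the Zuckerman transfer in order for the $K$-type matching in (ii) to land on the module $\theta_p^{n+r,m-r}(1)$ rather than on a twist by a sign character. This is the subtlety flagged for Section \ref{S22}, and it has to be resolved before the iteration in (iii) can be asserted to preserve the distinguished lift at each step.
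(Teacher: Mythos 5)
Your outline has the right skeleton—restrict along $K_{n,r,m-r}\subset K_{n,m}$, apply the derived Zuckerman functor, and compare $K$-types—but it leaves the central identification in part (ii) unjustified. Matching $K_{n+r,m-r}$-types alone does not pin down an irreducible Harish-Chandra module: $\thetaone$ is in general not a discrete series or a scalar-$K$-type representation, so there is no soft reason why a module with the same $K$-type expansion as $\theta_p^{n+r,m-r}(1)$ must be isomorphic to it. The paper supplies this missing step with a dedicated characterization result (Lemma~\ref{L33}): an irreducible $(\frakg,K_{n,m})$-module with the same infinitesimal character, $K_{n,m}$-types \emph{and} the same correspondences of infinitesimal characters along the two subpairs $\frakso(n{+}1)\times\frakso(m{-}1)$ and $\frakso(n{-}1)\times\frakso(m{+}1)$ must coincide with $\thetaone$. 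To feed Lemma~\ref{L33} the needed hypotheses, the paper first observes that the annihilator ideal is preserved under the Zuckerman functor, so $W_0=\Gamma^{pr}(\calF\thetaone)$ is annihilated by $I_p^{n,m}$, which in turn forces the required infinitesimal-character correspondences. Without some version of this rigidity argument, your proof of (ii) does not close.

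For part (i), your mechanism (candidate $\rO(n{+}r)$ highest weights fail positivity in the Enright--Wallach spectral sequence) is heuristically pointing in the right direction, but the paper's argument is sharper and cleaner: when $r'=m-r<2p$, Lemma~\ref{L31}(ii) shows that each $\theta_p^{n,r}(L(\mu))$ appearing in the branching \eqref{eq7} has \emph{singular} infinitesimal character; by Wigner's lemma the derived Zuckerman functor can only produce a nonzero answer when the input shares an infinitesimal character with a finite-dimensional representation, which is automatically regular, so every summand dies. That argument does not mention stable range of the target at all. For part (iii), the paper obtains $I_p^{n,m}=I_p^{n+r,m-r}$ directly from annihilator-preservation under $\Gamma^{pr}$ in both directions, and then cites Trapa for the maximal-primitive-ideal statement at the base point $n=2p$. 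Your proposed associated-variety route is a legitimately different strategy, but note that ``same associated variety and same infinitesimal character'' does not immediately give ``same primitive ideal''; you would need to argue that the associated variety of the annihilator is the minimal one among primitive ideals with that infinitesimal character, which essentially re-derives the content of Trapa's theorem. Part (iv) is essentially the paper's argument. Your closing remark about the disconnectedness of $K_{n+r,m-r}$ and the covering normalization is well taken and matches the cautionary note in Section~\ref{S22}, but is orthogonal to the principal gap above.
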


The module $\thetaone$ has been investigated in a number of
papers \cite{HZ}, \cite{KO}, \cite{LZ}, \cite{Z}.

\medskip

Irreducible representations of $\rO(r')$ are parameterized by certain
arrays of nonnegative integers $\mu = (\mu_1, \ldots, \mu_{r'}) \in
\bbZ^{r'}$. See Section \ref{S21}. We will use $\mu_{\rO(r')}$ or
simply $\mu$ to denote the corresponding irreducible finite
dimensional representation of $\rO(r')$.  Then the local theta lift
$L(\mu)$ of $\mu$ to a (possibly split) double cover
$\widetilde{\rSp}(p,\bbR)$ is a unitarizable lowest weight module.  A
result of \cite{EHW} states that conversely, a unitarizable lowest
weight module of the connected component of a double cover of
$\rSp(p,\bbR)$ is the restriction of a unique $L(\mu)$.  Let $U_1$
be the local theta lift of $L(\mu)$ to $\oo(n,r)$. If it is nonzero,
we will choose a genuine character $\varsigma''$ of $\oo(n,r)$
so that $\theta_p^{n,r}(L(\mu)) := \varsigma'' U_1$ is a
Harish-Chandra module of $\rO(n,r)$ and it has a $K_{n,r}$-type
decomposition as in \eqref{eq8}. We remark that
$\theta_p^{n,r}(L(\mu))$ is $\rO(n) \times 1$-admissible but it is
almost never $K$-multiplicity free. Let $\frakg_1 =
\frakso(n+r,\bbC)$. We will state our second main theorem.

\begin{thm} \label{T12} 
  Suppose $(p,n, r + r')$ is in the stable range.  Let $\mu$ and
  $\theta_p^{n,r}(L(\mu))$ as above. Let $0 < \rpp< r$. Let
  $\Gamma_1^{pt} =
  (\Gamma_{\frakg_1,K_{n,t,r-t}}^{\frakg_1,K_{n+t,r-t}})^{p\rpp}$
  denote the $pt$-th derived Zuckerman functor.
\begin{enumerate}[(i)]
\item If $2p > r+r'-\rpp$, then $\Gamma_1^{p\rpp}(\fgonetb
  \theta_p^{n,r}(L(\mu))) = 0$.
 
\item If $2p \leq r+r'-\rpp$, then
\[
\Gamma_1^{p\rpp}(\fgonetb \theta_p^{n,r}(L(\mu))) =
  \theta_p^{n+\rpp,r-\rpp}(L(\mu))
\]
as $(\frakg_1, K_{n+r,r-\rpp})$-modules.
\end{enumerate}
\end{thm}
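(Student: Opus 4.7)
The plan is to mimic the argument for Theorem~\ref{T11}, treating the trivial character case there as $\mu=1$, $r'=0$, and then feeding a nontrivial $\rO(r')$-type $\mu$ through one leg of the theta correspondence while tracking its effect on the $K$-types of the resulting $\rO(n,r)$-module under Zuckerman transfer. Since $\theta_p^{n,r}(L(\mu))$ is $\rO(n)\times 1$-admissible, the restriction to $K_{n,t,r-t}$ has finite-length $\rO(t)\times \rO(r-t)$-isotypic blocks, so the derived functor $\Gamma_1^{p\rpp}$ can be analyzed one $K$-type at a time.

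First I would record the explicit $K_{n,r}$-type decomposition of $\theta_p^{n,r}(L(\mu))$ supplied by \eqref{eq8} and restrict it along the chain $K_{n,r}\supset K_{n,t,r-t}$ using the classical branching rule for $\rO(r)\downarrow \rO(t)\times \rO(r-t)$. This gives a fully explicit $(\frakg_1,K_{n,t,r-t})$-structure of $\fgonetb\thetamu$. Then I would invoke the Enright--Wallach transfer-of-$K$-types theorem to compute $\Gamma_1^{p\rpp}$ on each $K_{n,t,r-t}$-type $\alpha\boxtimes\beta\boxtimes\gamma$: either a dominance/positivity condition on $(\alpha,\beta)$ fails and the contribution is zero, or the output is a single $K_{n+\rpp,r-\rpp}$-type $\delta\boxtimes\gamma$ obtained from $(\alpha,\beta)$ by a specific $\rho$-shift and concatenation. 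Summing over $K$-types produces a candidate multiplicity set on the $K_{n+\rpp,r-\rpp}$ side.

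For part~(ii) I would then compare this candidate with the $K_{n+\rpp,r-\rpp}$-type decomposition of $\theta_p^{n+\rpp,r-\rpp}(L(\mu))$, again read off from \eqref{eq8} with $(n,r)$ replaced by $(n+\rpp,r-\rpp)$. The combinatorial matching is a straightforward bookkeeping once branching and $\rho$-shift data are on the table: it amounts to verifying that concatenation of the $\rO(n)$- and $\rO(t)$-labels together with the $L(\mu)$-contribution is compatible with the map $(a,b)\mapsto(a+\rpp,b-\rpp)$ under the $\syp(p,\bbR)$-correspondence, and that the genuine character $\varsigma''$ passes through to the canonical genuine character on the target cover. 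Because the stable range hypothesis $(p,n,r+r')$ implies that $(p,n+\rpp,r-\rpp+r')$ remains in stable range whenever $2p\le r+r'-\rpp$, the target theta lift is nonzero and the identification of irreducibles is legitimate. For part~(i) I would show directly from the EW formula that, under $2p>r+r'-\rpp$, the $\rO(t)$-label of every $K_{n,t,r-t}$-type appearing in $\fgonetb\thetamu$ is too short to admit the required shift, forcing the pointwise transfer to vanish.

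The main obstacle will be the bookkeeping: simultaneously tracking branching multiplicities, the EW $\rho$-shift, and the genuine character $\varsigma''$ across the disconnected double covers $\oo(n,r)$ and $\oo(n+\rpp,r-\rpp)$, all while ensuring the output is irreducible and genuinely factors through a Harish-Chandra module of $\rO(n+\rpp,r-\rpp)$. A secondary technical hurdle is confirming that the Zuckerman cohomology is concentrated in the middle degree $p\rpp$, for which one needs a good-range argument analogous to the one used in Theorem~\ref{T11}; this is precisely what forces the inequality $2p\le r+r'-\rpp$ in (ii) and is the reason the theorem splits into a vanishing and a non-vanishing regime.
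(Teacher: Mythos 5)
Your proposal misses the key idea in the paper's proof and as written would not close. The paper does \emph{not} compute $\Gamma_1^{p\rpp}$ on $\theta_p^{n,r}(L(\mu))$ directly. Instead, setting $m = r+r'$, it applies $(\Gamma_{\frakg,K_{n,t,m-t}}^{\frakg,K_{n+t,m-t}})^{pt}$ to both sides of the isotypic decomposition \eqref{eq7} of the ambient module $\thetaone$ as a $(\frakg_1,K_{n,r})\times\rO(r')$-module. Naturality of the Zuckerman functor (Section~\ref{S22}) lets it pull the functor inside the sum, giving $\bigoplus_\mu (\Gamma_{\frakg_1,K_{n,t,r-t}}^{\frakg_1,K_{n+t,r-t}})^{pt}(\calF\theta_p^{n,r}(L(\mu)))\boxtimes\mu_{\rO(r')}$; Theorem~\ref{T11} identifies the whole left-hand side as $\theta_p^{n+t,m-t}(1)$; and comparing with the decomposition \eqref{eq7} for $\theta_p^{n+t,m-t}(1)$ reads off Theorem~\ref{T12}. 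The whole proof is three lines once Theorem~\ref{T11} is in hand. You are trying to redo this from scratch by branching \eqref{eq8} down to $K_{n,t,r-t}$ and applying a ``pointwise'' transfer, which is precisely the bookkeeping the paper is engineered to avoid.

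Beyond being the wrong route, your justification contains a substantive error. You claim that since $\theta_p^{n,r}(L(\mu))$ is $\rO(n)\times 1$-admissible, ``the derived functor $\Gamma_1^{p\rpp}$ can be analyzed one $K$-type at a time.'' That is false: the derived Zuckerman functor computes relative Lie algebra cohomology $\rH^i(\frakg_1,K;W\otimes F^*)$, which depends on the full $(\frakg_1,K)$-module structure of $W$, not on its $K$-type multiplicities alone. What \emph{does} make a $K$-type argument work in Proposition~\ref{P32} is the combination of unitarizability (so that \eqref{eq5} holds) with Wigner's lemma (forcing $F$ to share the regular infinitesimal character of $W$); only after that restriction to a single $F$ does the computation reduce to $\Hom_{K_{n,r}}(F\otimes\wedge^{pr}\frakp,W)$. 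You neither invoke this mechanism nor track the infinitesimal character of the $(\frakso(n+t,\bbC),K_{n,t})$-pieces you would need to understand, and this matters since those pieces are not $\theta$-lifts of anything in an obvious way. Relatedly, your vanishing argument for part (i), based on $\rO(t)$-labels being ``too short,'' is not what forces vanishing; in the paper's logic (via Theorem~\ref{T11}(i) and Lemma~\ref{L31}(ii)) the vanishing comes from the infinitesimal character being singular. Finally, even if your combinatorial matching of $K_{n+t,r-t}$-type multiplicities went through, you would have established an isomorphism of $K$-modules, not of $(\frakg_1,K_{n+t,r-t})$-modules; the paper sidesteps this by never touching $\theta_p^{n,r}(L(\mu))$ in isolation, instead reading the module structure off the already-identified $\theta_p^{n+t,m-t}(1)$.
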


In (ii), it is possible that $\theta_p^{n+\rpp,r-\rpp}(L(\mu)) = 0$
and when this happens, the above theorem says that the left hand side
of (ii) is also zero. If we set $r' = 0$ in the above theorem, then we
recover parts (i) and (ii) of Theorem~\ref{T11}. However the proof of
Theorem \ref{T12} requires Theorem \ref{T11}.

We will prove Theorems \ref{T11} and \ref{T12} in Section \ref{S2}.

In Theorems \ref{T11} and \ref{T12}, we have assumed that $(p,n,m)$ and
$(p,n,r+r')$ lie in the stable range. In Section~\ref{S34}, we will
address the situation when we are outside of this range.

We will briefly explain our motivation. It is a well known fact that
the correspondences of the infinitesimal characters induced by the
local theta lifts are independent of the real forms of the dual pair
\cite{H1} \cite{Pz}. We expect that the local theta lifts of a
representation to different real forms share many more interesting
properties which are related by the transfer of $K$-types.  For
example, Conjecture 5.1 in \cite{WZ} predicts an explicit relationship
between the transfers of $K$-types and the local theta lifts of
characters of $\rO(n,m)$ to $\widetilde{\rSp}(p,\bbR)$.  In a forth
coming paper of the second author, J.-j. Ma will show that theta
lifts of one dimensional representations and the transfers of
$K$-types are compatible operations and extend Theorems~\ref{T11} and
\ref{T12} to dual pairs $(\rU(n,m), \rU(p,q))$ and
$(\rSp(n,m),\rO^*(2p))$. Also see \cite{Ta}.

\subsection{Cohomological inductions.} \label{S13}
We will apply the above two theorems to some cohomological induced
modules.  In order to state our next main result, we recall some basic
definitions and notation from \cite{KV} and \cite{Wa1}.

We suppose $2p \leq n \leq m$, $2p < m$ and $m = r + r'$.  Let
$\frakg_1 = \frakso(n,r) \otimes \bbC = \frakso(n+r,\bbC)$.  Let
$\frakt_0$ be a compact Cartan subalgebra of $\frakso(n) \oplus
\frakso(r)$ in $\frakso(n,r)$. Let $\lambda_0' = (p,p-1,\ldots,1, 0, 0,
\ldots, 0) \in \sqrt{-1}\frakt_0^{\ast}$. Let $\frakq' =\frakl' +
\frakn'$ be the maximal $\theta$-stable parabolic subalgebra in
$\frakg_1$ defined by $\lambda_0'$. The real form of the Levi
subalgebra is $\frakl_0' = \fraku(1)^p \oplus \frakso(n-2p,m)$. Let
$L' = \rU(1)^p \times \rO(n-2p,r)$ be the subgroup in $\rO(n,r)$ whose
Lie algebra is $\frakl_0'$.  Let $\bbC_\lambda$ be a character of
$\rU(1)^p$. We extend it to a character of $L'$ such that
$\rO(n-2p,r)$ acts on it by $\det_{\rO(n-2p,r)}^p$. Let $\frakg_1 =
\frakk_{n,r} \oplus \frakp_{n,r}$ denote the complexification of the
Cartan decomposition of $\frakso(n,r)$. Let $M' = L' \cap K_{n,r} =
\rU(1)^p \times K_{n-2p,r}$ and
\[
s_0' = \dim (\frakk_{n,r} \cap
\frakn') = p(n-p-1).
\]
We refer to (5.3a) on page 328 in \cite{KV} and define a $(\frakg_1,
K_{n,m})$-module
\[
A(\lambda) = \calL_{s_0'}(\bbC_{\lambda}) =
(P_{\overline{\frakq'},M'}^{\frakg_1,K_{n,r}})_{s_0'}
(\calF_{\frakl',M'}^{\overline{\frakq'},M'}
(\bbC_\lambda^\sharp)),
\]
where $P_{\overline{\frakq'},M'}^{\frakg_1,K_{n,r}}$ is the induction
functor in Section II.1 in \cite{KV} and
$(P_{\overline{\frakq'},M'}^{\frakg_1,K_{n,r}})_{s_0'}$ is its
$s_0'$-th derived functor. Also see Section \ref{S4}. The module
$A(\lambda)$ has infinitesimal character $\lambda + \rho_{n+r}$.

Given a $\lambda$ as in Theorem \ref{T14} below, we will show in Lemma
\ref{L41} that the bottom layer $K_{n,r}$-type is the minimal
$K_{n,r}$-type of $A(\lambda)$. Let $\overline{A}(\lambda)$ denote the
irreducible subquotient of $A(\lambda)$ generated by the minimal
$K_{n,r}$-type. We can now state our main result on cohomological inductions.

\begin{thm}\label{T14}
  Suppose $2p \leq n \leq m$, $2p < m$.
\begin{enumerate}[(i)]
\item The irreducible $(\frakso(n+m,\bbC),K_{n,m})$-modules
  $\thetaone$ and $\overline{A}(\lambda)$ are
  isomorphic where $\lambda = -\frac{m+n}{2} \bfone_p$.
  
\item Let $0 \leq r < m$ and let $\mu = (\mu_1, \ldots, \mu_{r'})$
  such that $\mu_i = 0$ if $i > p$.  Then the irreducible
  $(\frakso(n+r,\bbC),K_{n,r})$-modules $\theta_p^{n,r}(L(\mu))$ and
  $\overline{A}(\lambda)$ are isomorphic where $\lambda = (\mu_1,
  \ldots, \mu_p) + \frac{m-n-2r}{2} \bfone_p$.
\end{enumerate}
\end{thm}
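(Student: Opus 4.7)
The plan is to prove (i) and (ii) in parallel; the argument rests on three ingredients: (a) the irreducibility and non-vanishing of the theta lift in the stable range \cite{Li}; (b) the fact, recorded in Lemma \ref{L41}, that the bottom-layer $K_{n,r}$-type of $A(\lambda)$ is the minimal $K_{n,r}$-type, so that $\overline{A}(\lambda)$ is the unique irreducible subquotient of $A(\lambda)$ containing it; and (c) the fact that this very $K$-type occurs in the theta lift, readable from the explicit decompositions \eqref{eq6} and \eqref{eq8}.

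The first step is to match infinitesimal characters. The module $A(\lambda)$ has infinitesimal character $\lambda + \rho_{n+r}$. For $\thetaone$ the infinitesimal character is $\rho_{n+m} - p \bfone_{(n+m)/2}$ by Theorem \ref{T11}(iii); a direct Weyl-group computation shows this equals $\lambda + \rho_{n+m}$ when $\lambda = -\frac{m+n}{2}\bfone_p$. For $\thetamu$ the infinitesimal character can be obtained by Przebinda's correspondence of infinitesimal characters for the dual pair $(\syp(p,\bbR),\oo(n,r))$ applied to $L(\mu)$, and a parallel computation verifies that it agrees with $\lambda + \rho_{n+r}$ for $\lambda = (\mu_1,\ldots,\mu_p) + \frac{m-n-2r}{2}\bfone_p$.

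The second step is the minimal $K_{n,r}$-type match. By the bottom-layer formula in Chapter V of \cite{KV}, the bottom-layer $K_{n,r}$-type of $A(\lambda)$ has highest weight $\lambda|_{\frakt \cap \frakk_{n,r}} + 2\rho(\frakn' \cap \frakp_{n,r})$. Since $\frakn' \cap \frakp_{n,r}$ pairs the $\rU(1)^p$ factor of $L'$ with the non-compact complement of $\rO(n-2p,r)$ inside $\rO(n,r)$, this shift is explicitly computable; comparing the resulting highest weight with the extremal $K_{n,r}$-types appearing in \eqref{eq6} and \eqref{eq8} yields the required match, after accounting for the genuine characters $\varsigma$ and $\varsigma''$ used to descend from $\oo(n,m)$ and $\oo(n,r)$ to $\rO(n,m)$ and $\rO(n,r)$. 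Once this coincidence is established, the theta lift contains the minimal $K_{n,r}$-type of $A(\lambda)$, hence contains $\overline{A}(\lambda)$ as a subquotient, and irreducibility of the theta lift forces equality.

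The principal obstacle is the explicit minimal $K$-type computation. Unfolding the parametrization on both sides requires careful bookkeeping: on the $A(\lambda)$ side, evaluating $2\rho(\frakn' \cap \frakp_{n,r})$ in coordinates compatible with the Howe-type parametrization of $K_{n,r}$-types in \eqref{eq8}, and handling the disconnectedness of $\rO(n) \times \rO(r)$; on the theta-lift side, identifying the correct extremal $K$-type among the families appearing in \eqref{eq6} and \eqref{eq8} and pinning down the twist by $\varsigma$ or $\varsigma''$. A secondary check is that $\lambda$ lies in the weakly fair range, so that $\overline{A}(\lambda)$ is nonzero with the predicted minimal $K$-type; under the hypotheses $2p \leq n \leq m$, $2p < m$ and $\mu_i = 0$ for $i > p$, this reduces to a finite set of inequalities on the entries of $\lambda + \rho(\frakn')$ that one verifies coordinatewise.
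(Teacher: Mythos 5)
There is a genuine gap at the crucial step, and it is conceptual rather than computational. Having matched infinitesimal characters and verified that the theta lift has a $K_{n,r}$-type isomorphic to the minimal $K_{n,r}$-type of $A(\lambda)$, you assert that ``the theta lift contains the minimal $K_{n,r}$-type of $A(\lambda)$, hence contains $\overline{A}(\lambda)$ as a subquotient.'' This does not follow: two Harish--Chandra modules sharing a $K$-type (and even an infinitesimal character) need not be related by any nonzero map, and $\overline{A}(\lambda)$ is by definition a subquotient of $A(\lambda)$, not of the theta lift. You would need to exhibit an actual homomorphism between $\theta_p^{n,r}(L(\mu))$ and $A(\lambda)$ (or otherwise show one is a subquotient of the other), and that is precisely what is hard here. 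Note also that the paper explicitly warns that $A_{\frakq'}(\lambda)$ need not be in the weakly good range, so one cannot invoke Langlands--Vogan unique-irreducible-subquotient results in the way one might hope; a standalone ``same infinitesimal character plus same minimal $K$-type'' criterion is simply false in general.

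The paper supplies the missing structural link quite differently. It first treats the base case $n=2p$ using Knapp's exact sequence $0\to Q'\to \ind Z^\sharp\to Q\to 0$ together with Trapa's irreducibility of the cohomologically induced quotient, and then identifies the resulting module with $\theta_p^{2p,m}(1)$ via the characterization Lemma~\ref{L33}, which requires not only the $K$-types and infinitesimal character but also the compatibility of infinitesimal characters under the one-step branchings $\frakso(n+1)\times\frakso(m-1)$ and $\frakso(n-1)\times\frakso(m+1)$ --- a condition your proposal never checks. The general case $n>2p$ is then obtained by a Grothendieck spectral sequence that reduces to the base case plus the transfer-of-$K$-types identity $\Gamma^{pr}(\calF\,\theta_p^{2p,m}(1))=\theta_p^{n,m}(1)$ from Theorem~\ref{T11}(ii). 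Finally, (ii) is extracted from (i) by decomposing both $A(\lambda)$ and $\theta_p^{n,m}(1)$ $\rO(m-r)$-equivariantly (Lemmas~\ref{L51}--\ref{L55}) and matching the graded pieces, where a weight argument pins down $\mu'=\mu$. Your infinitesimal-character match and your minimal-$K$-type computation (Lemma~\ref{L41}) are correct but only give necessary conditions; the bulk of the paper's work --- the Knapp/Trapa input, the spectral-sequence bootstrap via Theorem~\ref{T11}, and the filtration analysis of $\ind Z^\sharp$ --- is absent from the proposal and cannot be dispensed with.
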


In (i), we set $r = m$. In (ii), both $L(\mu)$ and $\theta_p^{n,r}(L(\mu))$ 
are zero if $\mu_{p+1} > 0$. 

We refer to page 330 in \cite{KV} for the definition of the
$(\frakg_1,K_{n,m}^0)$-module $A_{\frakq'}(\lambda) =
\calL_{s_0'}(\bbC_\lambda)$.  Let $\fgtc A(\lambda)$ denote the restriction
of $A(\lambda)$ as a $(\frakg_1,K_{n,m}^0)$-module. We will show in
Proposition \ref{P42} that $\fgtc A(\lambda)$ contains the
$(\frakg_1,K_{n,m}^0)$-module $A_{\frakq'}(\lambda)$ as a
submodule. If $n > 2p$, then $\fgtc
  A(\lambda)  = A_{\frakq'}(\lambda)$.
We remark that $A_{\frakq'}(\lambda)$ in the above theorem is not
always in the good or weakly good range (see Definition 0.49 in
\cite{KV}).  It is interesting to find unitarizable subquotients
generated by the images of the bottom layer maps.

We will explain the motivation of the above theorem. First we consider
the special case when $n = 2p < r$, $\mu = 0$ and $\lambda =
\frac{m-n-2r}{2} \bfone_p$. In \cite{Kn}, Knapp constructed a
unitarizable quotient $A'$ of $A_{\frakq'}(\lambda)$ by extending the
method of Gross and Wallach \cite{GW}.  The quotient module $A'$
contains the image of the bottom layer map. He asked if $A'$ is
irreducible and if $A'$ is related to local theta lifts.  The first
question was answered by Trapa where he showed that $A'$ is
irreducible \cite{T}. He also computed its associated cycle. In
\cite{PT}, Trapa and Paul show that $A'$ is a submodule of
$\theta_p^{2p,r}(L(0))$. Theorem \ref{T14} could be considered a
generalization of these results.

\subsection*{Acknowledgment}
We would like to thank the referee for the suggestion to use Zuckerman
functors for disconnected groups, which simplifies many proofs. We
thank Peter Trapa for directing our attention to the treatment of
disconnected groups in \cite{KV}.  We also thank Chen-bo Zhu for
helpful discussions and comments on the manuscript. The first author
is supported by an NUS grant R-146-000-131-112.

\section{The Zuckerman functors} 

The objectives of this section are to set up some notation and define
the Zuckerman functors in Theorems~\ref{T11} and \ref{T12}.

\subsection{Weights} \label{S21}

Let ${\mathbf{1}}_n := (1,1,\ldots,1)$ and ${\mathbf{0}}_n :=
(0,0,\ldots,0)$ in $\bbR^n$.  If $\lambda = (\lambda_1, \ldots,
\lambda_n) \in \bbR^n$ and $\xi = (\xi_1, \ldots, \xi_n) \in \bbR^m$,
then we denote $(\lambda_1, \ldots, \lambda_n, \xi_1, \ldots, \xi_m)
\in \bbR^{n+m}$ by $(\lambda, \xi)$.

Let $r = [\frac{n}{2}]$ and let $\rho_n \in \bbR^r$ denote the half
sum of positive roots of $\frakso(n,\bbC)$.  Let $\mu \in
  \bbR^r$ be a highest weight of $\frakso(n,\bbC)$. We will use
  $\mu_{\frakso(n)}$ or simply $\mu$ to denote the corresponding
  irreducible finite dimensional representation of $\frakso(n)$.

We will denote the trivial and the determinant representation of
$\rO(n)$ by $\bbC_{\rO(n)}$ and $\rd_n$ respectively.  Irreducible
representations of $\rO(n)$ are parameterized by arrays of the form
\begin{equation} \label{eq3}
\mu = (a_1, \ldots, a_k, \bfzero_{n-k})  \mbox{ or }
(a_1, \ldots, a_k, \bfone_{n-2k},\bfzero_{k})
\end{equation}
in $\bbZ^n$ where $a_i$ are positive integers, $a_i \geq a_{i+1}$ and
$k \leq [\frac{n}{2}]$. See \cite{GoW}. We will call these arrays {\it
  weights} of $\rO(n)$.  Let $\Lambda(\rO(n))$ denote the set of such
weights. Given such a weight~$\mu$, we will use $\mu_{\rO(n)}$ or
simply $\mu$ denote the corresponding irreducible finite dimensional
representation of $\rO(n)$.  The two representations corresponding to
the two highest weights in \eqref{eq3} differ by the character
$\rd_n$. In general, we would ignore the string of zeros at the end of
a weight $(a_1, \ldots, a_k, \bfzero_{n-k})$ and write it as $(a_1,
\ldots, a_k)$ instead.

Finally we recall a branching rule: The dimension of
$\Hom_{\rO(n)}(\mu_{\rO(n)}, \mu_{\rO(n+1)}')$ is at most one and it
is one if and only if $\mu_i' \geq \mu_i \geq \mu_{i+1}'$ for all $i =
1, \ldots, n$.

\medskip

Throughout this paper, we denote $K_n = \rO(n)$, $K_{n,m} = \rO(n) \times \rO(m)$,
$K_{n,r,r'} = \rO(n) \times \rO(r) \times \rO(r')$ and $K_{n,m}^0 =
\SO(n) \times \SO(m)$. Let $\frakg_1 = \frakso(n+r,\bbC)$. 
  Given an irreducible $(\frakg_1,K_{n,r})$-module, we will follow
  Harish-Chandra parametrization and use a weight $\mu$ of $\frakg_1$
  to denote its infinitesimal character. We note that the
  infinitesimal character is a character of
  $U(\frakg_1)^{\rO(n+r,\bbC)}$.  Two weights $\mu$ and $\mu'$ give
  the same infinitesimal character if and only if $\mu = w \mu'$ for
  some $w$ in the Weyl group of $\rO(n+r,\bbC)$.

  \subsection{Zuckerman functors} \label{S22} Let $W$ be a $(\frakg_1,
  K_{n,r})$-module. We will follow Section II.8.5 in \cite{BW} and
  \cite{Wa1} where it is established that $i$-th Zuckerman functor
\begin{equation}
\Gamma^i(W) = (\Gamma_{\frakg_1,K_{n,r}}^{\frakg_1,K_{n+r}})^i(W) =
\bigoplus_F \rH^i(\frakg_1,K_{n,r}; W \otimes F^*) \otimes F. \label{eq4}
\end{equation}
Here the sum is taken over all irreducible finite dimensional
representations $F$ of $\rO(n+r)$. Hence $\Gamma^i(W)$ is nonzero if
and only if the Lie algebra cohomology
\[
\rH^i(\frakg_1,K_{n,r}; W \otimes F^*)
\]
is nonzero for some $F$. Suppose the above Lie algebra cohomology is
nonzero, then by Wigner's lemma, $W$ and $F$ have the same
infinitesimal character. In particular $W$ has regular infinitesimal
character.  Since $K_{n,r}$ contains the center of
$\rO(n,r)$, $W$ and $F$ have the same central character, again by
  Wigner's lemma.  Let
$\frakg_1 = \frakk_{n,r} \oplus \frakp_{n,r}$ be the complexified
Cartan decomposition. By Proposition 9.4.3 in \cite{Wa1}, if $W$ is
unitarizable, then
\begin{equation}
\rH^i(\frakg_1,K_{n,r}; W \otimes F^*) =
\Hom_{K_{n,r}}(\wedge^i \frakp_{n,r}, W \otimes F^*)
 = \Hom_{K_{n,r}}(F \otimes \wedge^i \frakp_{n,r}, W).
\label{eq5}
\end{equation}
We remark that \cite{Wa1} requires that the maximal compact subgroup
is connected. However the proof there works for $K_{n,r}$ too
without any modification.

Let $\frakg = \frakso(n+r+r',\bbC)$. Suppose $W$ is a $(\frakg,
K_{n,r,r'})$-module. Let $\calF W :=
\calF_{\frakg,K_{n,r,r'}}^{\frakg_1,K_{n,r}} W$ denote the restriction
of $W$ as a $(\frakg_1,K_{n,r})$-module. By the naturality property
of the Zuckerman functor in Chapter 6 of \cite{Wa1}, the $(\frakg,
K_{n,r,r'})$-module structure on $W$ gives a natural $(\frakg,
K_{n+r,r'})$-module structure on $\Gamma^i(\calF W)$. This
$(\frakg, K_{n+r,r'})$-module $\Gamma^i(\calF W)$ is
isomorphic to $(\Gamma_{\frakg,K_{n,r,r'}}^{\frakg,K_{n+r,r'}})^i W$
by Section II.8.5(1) in \cite{BW}. In other words,
$(\Gamma_{\frakg,K_{n,r,r'}}^{\frakg,K_{n+r,r'}})^i(W)$ is computed by
$\Gamma^i(\calF W)$ in the category of $(\frakg_1,K_{n,r})$-modules.

\section{Proofs of Theorems \ref{T11} and \ref{T12}} \label{S2}

First we recall some facts about the local theta lift $U_0$ of the
character $\varsigma'$ of $\syp(p,\bbR)$ in \cite{Lo}. We will assume
that $n + m$ is even and $(p,n,m)$ is in the stable range.  There are
four choices of the genuine character $\varsigma$ of
$\oo(n,m)$ as mentioned in the introduction.  We will fix a
choice of $\varsigma$ so that $\thetaone = \varsigma
U_0$ has $K_{n,m}$-types
\begin{equation}
  \thetaone = \bigoplus_{l = (l_1, \ldots, l_p)} \left(\rd_n^p \, (l +
  \frac{m-n}{2} \bfone_p, \bfzero_{n-p})_{\rO(n)} \right) \boxtimes (l,
  \bfzero_{m-p})_{\rO(m)} \label{eq6}
\end{equation}
where $\rd_n = \det_{\rO(n)}$ and the sum is taken over $l =
(l_1,\ldots, l_p)$ such that $l_i$ are non-negative integers and $l_1
\geq l_2 \geq \ldots \geq l_p \geq \min(\frac{n-m}{2},0)$.  The module
$\thetaone$ is unitarizable. The minimal $K_{n,m}$-type
$\tau_{\min}$ is $\rd_n^p (\frac{m-n}{2} \bfone_p,
\bfzero_{n-p})_{\rO(n)} \boxtimes \bbC_{\rO(m)}$ if $m \geq n$ and
$\rd_n^p \boxtimes (\frac{n-m}{2} \bfone_p, \bfzero_{m-p})_{\rO(m)}$
if $m < n$.

Let $\frakg = \frakso(n+m,\bbC)$ and $K_{n,m}^0 = \SO(n) \times \SO(m)$. If $\min(n,m) = 2p$ then
$\thetaone$ splits into a direct sum of two irreducible
$(\frakg,K_{n,m}^0)$-modules. If $n,m > 2p$, then it is an irreducible
$(\frakg,K_{n,m}^0)$-module.

Let $m = r + r'$ and let $\frakg_1 = \frakso(n+r,\bbC)$. Since
$\thetaone$ is unitarizable and $\rO(n) \times 1$-admissible, the
restriction of $\thetaone$ as a 
$(\frakg_1, K_{n,r}) \times \rO(r')$-module decomposes discretely as a direct
sum 
\begin{equation}
\thetaone = \bigoplus_{\mu \in \Lambda(\rO(r'))} \Omega(\mu)
\boxtimes \mu_{\rO(r')}.
\label{eq7}
\end{equation}
It follows that $\Omega(\mu)$ is unitarizable and it has
$K_{n,r}$-types
\begin{equation} \label{eq8}
\Omega(\mu) = \bigoplus_{l = (l_1, \ldots, l_p)}
 \left( \rd_n^p \, (l + \frac{m-n}{2} \bfone_p,
     \bfzero_{n-p})_{\rO(n)} \boxtimes \bigoplus_{\kappa}
  m(l,\mu,\kappa) \kappa_{\rO(r)} \right)
\end{equation}
where the sum is as in \eqref{eq6} and $m(l,\mu,\kappa)$ is the
multiplicity of $\kappa_{\rO(r)} \boxtimes \mu_{\rO(r')}$
in $(l, \bfzero_{m-p})_{\rO(m)}$.

Suppose $k \leq p$ and $k\leq r'$.  We lift $\mu_{\rO(r')} = (\mu_1,
\ldots, \mu_k, \bfzero_{r'-k})_{\rO(r')}$ to a unitarizable lowest
weight module $L(\mu)$ of a double cover of $\rSp(p,\bbR)$.  The
lowest $\widetilde{\rU}(p)$-type has highest weight (see~\cite{KaV})
\[
(\mu_1 \ldots, \mu_k, \bfzero_{p-k}) + \frac{r'}{2} \bfone_p.
\]
Let $U_1$ be the local theta lift of $L(\mu)$ to $\oo(n,r)$. In
\cite{Lo} we proved that $U_1$ is an irreducible and unitarizable
Harish-Chandra module of $\oo(n,r)$. There exists a genuine
character $\varsigma''$ of $\oo(n,r)$ such that
$\Omega(\mu) = \varsigma'' U_1$. Furthermore
$\Omega(\mu) \neq 0$ if and only if $U_1 \neq 0$.
From now on, we will denote $\Omega(\mu)$ by $\theta_p^{n,r}(L(\mu))$.

Let $\delta = \frac{n-m+2r}{2}$. Let $s = [r'/2]$ and let $\lambda =
(\lambda_1, \ldots, \lambda_s)$ denote the infinitesimal character of
$\mu_{\rO(r')}$.

\begin{lemma} \label{L31}
Suppose $(p,n, m)$ is in stable range and $m = r +
r'$. 
\begin{enumerate}[(i)]
\item If $r' \geq 2p$, $\theta_p^{n,r}(L(\mu))$ has infinitesimal
  character $(\mu - \delta \bfone_p, \bfzero) + \rho_{n+r}$. The
  infinitesimal character is regular if and only if $\mu_p \geq
  \delta$.

\item If $r' < 2p$, then $\theta_p^{n,r}(L(\mu))$ has infinitesimal
  character $(\lambda_1, \ldots, \lambda_s, \rho_{2 \delta} -
  \frac{2p-r'}{2} \bfone_{\delta})$. The infinitesimal character is
  singular.
\end{enumerate}
\end{lemma}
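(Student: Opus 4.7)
My plan is to exploit the real-form-independence of the infinitesimal character correspondence under the local theta lift (due to Howe \cite{H1} and Przebinda \cite{Pz}), which reduces the problem to computing the infinitesimal character of $L(\mu)$ on the symplectic side and then applying the Howe--Przebinda formula for the complex dual pair $(\fraksp(2p,\bbC),\frakso(n+r,\bbC))$. Since the genuine character $\varsigma''$ is one dimensional, $\theta_p^{n,r}(L(\mu))=\varsigma''U_1$ and $U_1$ have the same infinitesimal character, so it suffices to work with $U_1$.

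First I would compute the infinitesimal character of $L(\mu)$ as a $\fraksp(p,\bbC)$-module. The lowest $\widetilde{\rU}(p)$-type vector of highest weight $\nu=(\mu_1,\ldots,\mu_k,\bfzero_{p-k})+\frac{r'}{2}\bfone_p$ is annihilated by both the compact positive nilradical and $\frakp^-$, hence is a highest weight vector for the Borel $\mathfrak{b}=\frakh\oplus\frakn_{\frakk}^+\oplus\frakp^-$ of $\fraksp(p,\bbC)$. Its half sum of positive roots equals $\rho_{\frakk}-\rho_n=-(1,2,\ldots,p)$, where $\rho_n=\frac{p+1}{2}\bfone_p$. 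Consequently $L(\mu)$ has Harish-Chandra infinitesimal character represented by $\nu-(1,2,\ldots,p)=(\mu_1+r'/2-1,\ldots,\mu_p+r'/2-p)$, with the convention $\mu_i=0$ for $i>k$.

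Next, the stable range hypothesis gives $2p\leq n\leq n+r$, so the orthogonal member of the complexified dual pair has at least the rank of the symplectic one, and the Howe--Przebinda correspondence adjoins $\rho_{n+r-2p}$ to any $\fraksp(p,\bbC)$-parameter to produce the corresponding $\frakso(n+r,\bbC)$-parameter of the theta lift. In case (i) with $r'\geq 2p$, the identity $\frac{r'}{2}-\frac{n+r}{2}=-\delta$ rearranges the resulting tuple into $(\mu-\delta\bfone_p,\bfzero)+\rho_{n+r}$, as claimed. Regularity then reduces to comparing the smallest coordinate $\mu_p+r'/2-p$ with the largest entry $\frac{n+r}{2}-p-1$ of $\rho_{n+r-2p}$, yielding the criterion $\mu_p\geq\delta$.

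For case (ii) with $r'<2p$, the trailing entries $\frac{r'}{2}-(s+1),\ldots,\frac{r'}{2}-p$ of the $\fraksp(p,\bbC)$-parameter become negative; applying Weyl-group sign flips converts them to $1,2,\ldots,p-[r'/2]$, while the first $s=[r'/2]$ entries agree with the components of the infinitesimal character $(\lambda_1,\ldots,\lambda_s)$ of $\mu_{\rO(r')}$. Adjoining $\rho_{n+r-2p}$ and repackaging the combined nonnegative tail into its dominant form produces exactly $\rho_{2\delta}-\frac{2p-r'}{2}\bfone_\delta$, which is singular because of the built-in repeats between the flipped entries and the entries of $\rho_{n+r-2p}$. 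The principal technical care will be in tracking the parities of $r'$ and of $n+r$ (i.e.\ whether $\rO(n+r)$ is of type $B$ or type $D$) and in verifying the Howe--Przebinda formula $(c_1,\ldots,c_p)\mapsto (c_1,\ldots,c_p,\rho_{n+r-2p})$ in our setting; the latter follows from the explicit joint Harish-Chandra homomorphism on the oscillator representation.
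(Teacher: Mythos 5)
Your proof is correct and takes essentially the same route as the paper: the paper simply cites the Howe--Przebinda infinitesimal character correspondences for the two dual pairs $(\frakso(r',\bbC),\fraksp(p,\bbC))$ and $(\fraksp(p,\bbC),\frakso(n+r,\bbC))$ and leaves the details to the reader, while you fill in those details by computing the Harish-Chandra parameter of $L(\mu)$ from its lowest $\widetilde{\rU}(p)$-type and then adjoining $\rho_{n+r-2p}$. Your regularity/singularity discussion is somewhat terse (the ``if and only if'' ultimately hinges on the parity match between $\mu_p + r'/2 - p$ and the consecutive string of $\rho_{n+r-2p}$-entries, and on the fact that $n+r-2p\geq 3$ forces an overlap in part (ii)), but these are exactly the routine details the paper itself omits.
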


\begin{proof}
  This follows from the correspondences of infinitesimal characters
  $\frakso(r',\bbC) \leftrightarrow {\mathfrak{sp}}(p,\bbC)$ and
  ${\mathfrak{sp}}(p,\bbC) \leftrightarrow \frakso(n+r,\bbC)$
  established by the oscillator representations \cite{H1} \cite{Pz}.
  We will leave the details to the reader. 
\end{proof}

\medskip

We apply $(\Gamma_{\frakg,K_{n,r,r'}}^{\frakg,K_{n+r,r'}})^{pr}$ 
defined in Section \ref{S22} to \eqref{eq7} and we get
\begin{equation} \label{eq9}
(\Gamma_{\frakg,K_{n,r,r'}}^{\frakg,K_{n+r,r'}})^{pr}(\calF
 \thetaone) = \bigoplus_{\mu \in \Lambda(\rO(r'))}
(\Gamma_{\frakg_1,K_{n,r}}^{\frakg_1,K_{n+r}})^{pr}(\theta_p^{n,r}(L(\mu)))
 \boxtimes \mu_{\rO(r')}
\end{equation}
where $\calF = \calF_{\frakg,K_{n,m}}^{\frakg,K_{n,r,r'}}$ is the forgetful functor. 
From now on, we will write $\calF$ instead of $\calF_{\frakg,K_{n,m}}^{\frakg,K_{n,r,r'}}$ if $(\frakg,K_{n,m})$ 
and $(\frakg,K_{n,r,r'})$ could be inferred from the equation.
As explained in Section \ref{S22}, the equality in the above equation
holds by the naturality the Zuckerman functor in Chapter 6 in
\cite{Wa1}.

By Lemma \ref{L31}(ii), if $r' < 2p$ then $\theta_p^{n,r}(L(\mu))$ has
singular infinitesimal characters and
$(\Gamma_{\frakg_1,K_{n,r}}^{\frakg_1,K_{n+r}})^{pr}
(\theta_p^{n,r}(L(\mu))) = 0$.  This proves Theorem \ref{T11}(i).

\medskip

From now on, we will assume that $n, r' \geq 2p$ and $\mu_p \geq \delta$.
Let $\frakt$ be a Cartan subalgebra of the complexified Lie algebra
$\frakk_{n,r}$ of $K_{n,r}$. We set
\begin{equation}
\xi = \xi(\mu) := (\mu - \delta \bfone_p, \bfzero_{n+r-p})
\label{eq10}
\end{equation}
and $\bar{\xi} = \bar{\xi}(\mu) := (\mu - \delta \bfone_p, \bfzero) \in
\frakt^*$. Then both $\theta_p^{n,r}(L(\mu))$ and $\xi(\mu)_{\rO(n+r)}$
have regular infinitesimal character $\bar{\xi}(\mu) + \rho_{n+r}$.

We refer to the $K_{n,r}$-types of $\theta_p^{n,r}(L(\mu))$ in
\eqref{eq8}. By the branching rule described in Section~\ref{S21}, we
see that the minimal $K_{n,r}$-type of $\theta_p^{n,r}(L(\mu))$ is
\begin{equation}
\tau_{\min}(\mu) :=  \rd_n^p (\mu + \frac{m-n}{2} \bfone_p,
    \bfzero_{n-p})_{\rO(n)} \boxtimes \bbC_{\rO(r)}. \label{eq11}
\end{equation}

Proposition \ref{P32} below computes
$(\Gamma_{\frakg_1,K_{n,r}}^{\frakg_1,K_{n+r}})^{pr}
(\theta_p^{n,r}(L(\mu)))$. For connected groups, this is a special
case of Section 5 in \cite{VZ} or Theorem 9.5.3 in \cite{Wa1}. We will
give a proof for completeness.

\begin{prop} \label{P32} 
If $\theta_p^{n,r}(L(\mu))$ has regular infinitesimal character $\bar{\xi}(\mu) +
\rho_{n+r}$ , then
\[(\Gamma_{\frakg_1,K_{n,r}}^{\frakg_1,K_{n+r}})^{pr}
(\theta_p^{n,r}(L(\mu))) = \rd_{n+r}^p \xi(\mu)_{\rO(n+r)}.
\]
\end{prop}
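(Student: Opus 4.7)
The plan is to unpack the Zuckerman functor via formula \eqref{eq4}. Since $\theta_p^{n,r}(L(\mu))$ is unitarizable, \eqref{eq5} rewrites the degree-$pr$ summand indexed by an irreducible $\rO(n+r)$-representation $F$ as $\Hom_{K_{n,r}}(F \otimes \wedge^{pr}\frakp_{n,r}, \theta_p^{n,r}(L(\mu))) \otimes F$. My first task is to cut down the sum. By Wigner's lemma $F$ must share the infinitesimal character $\bar\xi(\mu) + \rho_{n+r}$ of $\theta_p^{n,r}(L(\mu))$, and regularity together with the hypothesis $\mu_p \geq \delta$ (which makes $\xi(\mu)$ dominant) forces the dominant highest weight of $F$ to be $\xi(\mu)$ itself. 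This leaves only two irreducible candidates, $\xi(\mu)_{\rO(n+r)}$ and its $\rd_{n+r}$-twist; the central character condition on $K_{n,r}$, also from Wigner's lemma, then isolates $F_0 := \rd_{n+r}^p\,\xi(\mu)_{\rO(n+r)}$ as the essentially unique contributor and forces the other twist to contribute zero.

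The second step is a minimal $K_{n,r}$-type matching showing that $\Hom_{K_{n,r}}(F_0 \otimes \wedge^{pr}\frakp_{n,r}, \theta_p^{n,r}(L(\mu)))$ is one-dimensional. The target module contains the minimal $K_{n,r}$-type $\tau_{\min}(\mu)$ of \eqref{eq11} with multiplicity one. As an $\rO(n) \times \rO(r)$-module, $\frakp_{n,r} \cong \bbC^n \otimes \bbC^r$, so the Cauchy/Howe decomposition of $\wedge^{pr}(\bbC^n \otimes \bbC^r)$ contains a $\GL(n) \times \GL(r)$-summand whose $K_{n,r}$-restriction carries the $\rO(n)$-irrep with highest weight $(r\bfone_p, \bfzero_{n-p})$ tensored with $\rd_r^p$. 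Branching $F_0$ from $\rO(n+r)$ down to $\rO(n) \times \rO(r)$ using the rule recalled in Section \ref{S21} extracts the summand whose $\rO(n)$-component is $\rd_n^p (\mu - \delta\bfone_p, \bfzero_{n-p})_{\rO(n)}$ paired with the trivial $\rO(r)$-type, and combining the $\rd_{n+r}^p$-twist with the $\rd_r^p$ coming from $\wedge^{pr}\frakp_{n,r}$ leaves the $\rO(r)$-part trivial. Using the shift identity $r - \delta = (m-n)/2$, the tensor product produces exactly $\tau_{\min}(\mu)$, giving one contribution to the Hom space. A weight comparison against the full $K_{n,r}$-spectrum \eqref{eq8} rules out additional copies: every other $K_{n,r}$-type in $\theta_p^{n,r}(L(\mu))$ has a strictly larger $\rO(n)$-highest weight, and those cannot be reached from the $\rO(n) \times \rO(r)$-content of $F_0 \otimes \wedge^{pr}\frakp_{n,r}$.

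The main obstacle will be the combinatorial bookkeeping in this second step: one must simultaneously control the $K_{n,r}$-decomposition of $\wedge^{pr}\frakp_{n,r}$, the $\rO(n+r) \downarrow \rO(n) \times \rO(r)$ branching of $F_0$, and the $K_{n,r}$-spectrum \eqref{eq8} of $\theta_p^{n,r}(L(\mu))$, which (in contrast to the $r'=0$ case of \eqref{eq6}) is not $K$-multiplicity free. The stable range hypothesis and the regularity condition $\mu_p \geq \delta$ are precisely what make these three combinatorial structures align to a single copy of $\tau_{\min}(\mu)$ and nothing extra, yielding the equality claimed in the proposition.
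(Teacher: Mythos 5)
Your setup mirrors the paper's: you use \eqref{eq4} and \eqref{eq5} to reduce to the Hom space, invoke Wigner's lemma to restrict to the two $\rO(n+r)$-representations with the right infinitesimal character, and then aim to pin down the Hom space via the minimal $K_{n,r}$-type $\tau_{\min}(\mu)$. Where you diverge is precisely at the crux of the argument, and that is where your proposal has a real gap.

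The paper's proof of the one-dimensionality does not decompose $\wedge^{pr}\frakp_{n,r}$ via Cauchy duality nor branch $F_0$ from $\rO(n+r)$ to $\rO(n)\times\rO(r)$. Instead it observes that $F$ is a quotient of the generalized Verma module $U(\overline{\frakn'})\otimes\bbC_{\overline{\xi}}$, so every $\frakt$-weight of $F$ has the form $\mu - \delta\bfone_p - \sum\alpha$ with $\sum\alpha$ a nonnegative sum of $\frakn'$-roots. Plugging a putative weight equation \eqref{eq13} into an inner product with $\eta = (\bfone_p,\bfzero)$ gives a scalar inequality whose two sides have opposite signs; positivity forces $l = \mu$, $\sum\alpha = \sum\gamma' = \sum\beta = 0$, $\sum\gamma = r\bfone_p$, and $\kappa = 0$ all at once. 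This single linear-functional argument replaces the ``combinatorial bookkeeping'' you flag as your main obstacle. Your step two leaves that bookkeeping unresolved: the assertion that no other $K_{n,r}$-type of $\theta_p^{n,r}(L(\mu))$ can be hit by $F_0\otimes\wedge^{pr}\frakp_{n,r}$ is exactly the hard content and you do not prove it. Note also that the branching rule recalled in Section~\ref{S21} is the one-step rule $\rO(n)\downarrow\rO(n-1)$, not the $\rO(n+r)\downarrow\rO(n)\times\rO(r)$ branching you would actually need; the latter is a genuine Littlewood--Richardson computation with orthogonal modification rules and is not elementary.

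A second, smaller problem: you propose to eliminate the twist $\rd_{n+r}\,\xi(\mu)_{\rO(n+r)}$ by a central character argument. But $\rd_{n+r}$ restricted to the center $\{\pm I_{n+r}\}$ of $\rO(n,r)$ is $(\pm 1)^{n+r}$, so when $n+r$ is even the two twists have identical central characters and Wigner's lemma alone cannot separate them. The paper does not make this move; it distinguishes the twists only at the very end, via the identity $\Hom_{K_{n,r}}(F\otimes\wedge^{pr}\frakp,\theta_p^{n,r}(L(\mu))) = \Hom_{K_{n,r}}(F',\tau_{\min}(\mu))$ and inspecting which twist of $F$ makes $F'$ match $\tau_{\min}(\mu)$. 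You should replace the central-character step with this final comparison, or (better) adopt the $\eta$-inner-product argument which delivers everything simultaneously.
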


\begin{proof}
  Let $\frakso(n+r,\bbC) = \frakk \oplus \frakp$ be the complexified
  Cartan decomposition.  We set $i = pr$ and $W =
  \theta_p^{n,r}(L(\mu))$ in \eqref{eq4} and \eqref{eq5}.  By Section
  \ref{S22}, \eqref{eq5} is nonzero only if $F$ is an irreducible
  finite dimensional representation of $\rO(n+r)$ with the same
  infinitesimal character as~$\theta_p^{n,r}(L(\mu))$. Hence we may
  assume $F$ is either $\xi(\mu)_{\rO(n+r)}$ or $\rd_{n+r}
  \xi(\mu)_{\rO(n+r)}$. Now it is suffices to show that
\begin{equation}
\Hom_{K_{n,r}}(F \otimes \wedge^{pr} \frakp,
\theta_p^{n,r}(L(\mu))) \label{eq12}
\end{equation}
is one dimensional if $F = \rd_{n+r}^p \xi(\mu)_{\rO(n+r)}$, and zero
if otherwise. 

We note that $F$ restricts irreducibly to $\SO(n+r)$ so it is a
quotient of the generalized Verma module $U(\frakg_1)
\otimes_{\frakq'} \bbC_{\overline{\xi}} = U(\overline{\frakn'})
\otimes_\bbC \bbC_{\overline{\xi}}$. Hence a $\frakt$-weight of $F$ is
of the form \linebreak $\mu - \delta \bfone_p - \sum \alpha$ where
$\sum \alpha$ is a sum some positive roots in $\frakn'$.

Let $\phi$ be a nonzero homomorphism of \eqref{eq12}. Let $\tau =
\rd_n^p(l + \frac{m-n}{2} \bfone_p)_{\rO(n)} \otimes \kappa_{\rO(r)}$
be a $K_{n,r}$-type of $\theta_p^{n,r}(L(\mu))$ in
\eqref{eq8}. Suppose $\tau$ is in the image of $\phi$. 
  Then the image contains the $\frakt$-weight $l + \frac{m-n}{2}
  \bfone_p + \bar{\kappa}$ where $\bar{\kappa}$ denotes a highest
  $\frakt$-weight of $\kappa_{\rO(r)}$. There is a $\frakt$-weight
$\mu - \delta \bfone_p - \sum \alpha$ of $F$ and a $\frakt$-weight $\sum
\beta + \sum \gamma - \sum \gamma'$ of $\wedge^{pr} \frakp$ such that
\begin{equation} \label{eq13}
\mu - \delta \bfone_p - \sum \alpha + \sum \beta + \sum \gamma - \sum
\gamma' = l + \frac{m-n}{2} \bfone_p + \bar{\kappa}.
\end{equation}
Here $\sum \beta$ is a sum of distinct roots in $\frakp \cap \frakl'$
and, $\sum \gamma$ and $\sum \gamma'$ are sums of distinct roots in
$\frakp \cap \frakn'$. Taking inner products with $\eta = (\bfone_p,
\bfzero)$ on both sides and rearrange the terms give
\[
\sum_{i=1}^p (l_i - \mu_i) + \langle \eta, \sum \alpha \rangle = -\left(pr-
\langle \eta, \sum \gamma \rangle \right) - \langle \eta, \sum \gamma' \rangle
 \leq 0.
\]
By the description of the $K_{n,r}$-types in \eqref{eq11}, the left hand side is non-negative.
Hence the left hand side is zero. This gives $l =
\mu$, $\sum \alpha = 0$, $\sum \gamma' = 0$, $\sum \gamma = r
\bfone_p$.  Since $l = \mu$, $\tau$ is the minimal $K_{n,r}$-type
$\tau_{\min}(\mu)$ and $\kappa = 0$. Putting these back into
\eqref{eq13} gives $\sum \beta = 0$.

Let $v_F$ be a highest weight vector of $F$ and let $v$ be a nonzero
vector in $\wedge^{pr} (\frakp \cap \frakn') = \wedge^{\mathrm{top}} (\frakp \cap \frakn')$. Then $v_F \otimes v$ is
a highest weight vector with respect to $\frakso(n) \oplus \frakso(r)$
in $F \otimes \wedge^{pr} \frakp$. Let $F'$ denote the irreducible
$K_{n,r}$-submodule generated by $v_F \otimes v$.  Now \eqref{eq12} becomes
\[
\Hom_{K_{n,r}}(F \otimes \wedge^{pr} \frakp, \theta_p^{n,r}(L(\mu))) =
\Hom_{K_{n,r}}(F', \tau_{\min}(\mu)). 
\]
Indeed the $\frakt$-weights argument above showed that the right hand
side contains the left hand side. The left hand side clearly contains
the right hand side. Finally the right hand side has dimension 0 or 1. 
It is 1 if and only if $F = \rd_{n+r}^p
\xi(\mu)_{\rO(n+r)}$. This proves the lemma.
\end{proof}

Before completing the proof of Theorem \ref{T11}, we need a
characterization of $\thetaone$.

\begin{lemma} \label{L33} 
Suppose $(p,n, m)$ is in stable range. Let $W_0$ be a Harish-Chandra
module of $\rO(n,m)$ which has the same infinitesimal character and
$K_{n,m}$-types as $\thetaone$. If $n,m > 2p$, then we further
assume that $W_0$ respects the correspondence of infinitesimal
characters in Lemma~\ref{L31}(i) for the pairs $\frakso(n+1) \times
\frakso(m-1)$ and $\frakso(n-1) \times \frakso(m+1)$. Then $W_0$ is
isomorphic to $\thetaone$.
\end{lemma}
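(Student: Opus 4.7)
The plan is to leverage the multiplicity-freeness of the $K_{n,m}$-spectrum listed in \eqref{eq6}. Since $W_0$ and $\thetaone$ have identical multiplicity-free $K_{n,m}$-spectra, there is a $K_{n,m}$-equivariant linear isomorphism $\Phi \colon W_0 \to \thetaone$, unique up to a choice of nonzero scalar on each $K_{n,m}$-isotypic component. Writing $\frakg = \frakk_{n,m} \oplus \frakp_{n,m}$, Schur's lemma then reduces the comparison of the two $(\frakg, K_{n,m})$-module structures to comparing the actions of $\frakp_{n,m}$: these actions differ by a family of scalars indexed by pairs of adjacent $K_{n,m}$-types in the parameter lattice $l = (l_1, \ldots, l_p)$ of \eqref{eq6}, and the lemma becomes the statement that these scalars can all be absorbed into the choice of $\Phi$.

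I would first show that the submodule $V \subseteq W_0$ generated by the minimal $K_{n,m}$-type $\tau_{\min}$ exhausts $W_0$. The relations $[\frakp_{n,m}, \frakp_{n,m}] \subseteq \frakk_{n,m}$, together with the eigenvalues of the generators of the center of $U(\frakg)$ fixed by the infinitesimal character hypothesis, force the action of $\frakp_{n,m}$ out of every $K_{n,m}$-type in $V$ to be nonzero in each direction allowed by \eqref{eq6}; iterating from $\tau_{\min}$ therefore realizes every $K_{n,m}$-type of \eqref{eq6} inside $V$, so $V = W_0$ by multiplicity-freeness. Along the way one reads off explicit recursions for the ladder scalars, and in the boundary case $\min(n,m) = 2p$ the lattice walls (the inequalities $l_p \geq \min(\tfrac{n-m}{2}, 0)$) impose enough vanishings to pin down the scalars uniquely from the infinitesimal character, yielding $W_0 \cong \thetaone$ after absorbing the remaining freedom into $\Phi$.

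In the interior case $n, m > 2p$, the $K$-type lattice has no walls and a residual discrete ambiguity can survive the infinitesimal character constraint on $\frakg$ alone, corresponding to different distributions of the ladder scalars between the positive and negative root directions of $\frakp_{n,m}$. The hypothesis that $W_0$ respects the infinitesimal character correspondence of Lemma~\ref{L31}(i) for the subalgebras $\frakso(n+1) \times \frakso(m-1)$ and $\frakso(n-1) \times \frakso(m+1)$ fixes the Casimir eigenvalues of these smaller orthogonal Lie algebras on each isotypic piece of $W_0$, and this additional rigidity determines the remaining scalar freedom. The main obstacle I anticipate is this final rigidification step: carefully verifying that the two infinitesimal character correspondences uniquely pin down every surviving ladder scalar rather than merely constrain them. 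I would attack it by induction on $\min(n, m)$, using the $\min(n, m) = 2p$ case settled above as the base, together with the compatibility between the restriction to the Levi subalgebras and the $\frakp_{n,m}$-action.
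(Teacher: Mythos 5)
Your strategy is a genuinely different route from the paper's. The paper does not work directly with ladder scalars on the full $K_{n,m}$-lattice. Instead it restricts $W_0$ to $(\frakso(n+1,\bbC), K_{n,1}) \times \rO(m-1)$, obtaining a decomposition $W_0 = \bigoplus_\mu W_0(\mu) \boxtimes \mu_{\rO(m-1)}$, and shows each $W_0(\mu) \simeq \theta_p^{n,1}(L(\mu))$: if $n=2p$ this is a discrete series of $\rO(2p,1)$ and Schmid's theorem (discrete series are determined by $K$-types) applies; if $n>2p$ these are irreducible $(\frakso(n+1,\bbC),\SO(n))$-modules, which are determined by their infinitesimal character and $\SO(n)$-types — and here is where the extra hypothesis on the Lemma 3.1(i) correspondence is used. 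The global isomorphism is then assembled piecewise as in \cite[Prop.~8.2.1(i), \S 8.5]{Lo}. The case $m = n$ is dispatched separately by Zhu's characterization of scalar-$K$-type modules.

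Your sketch has genuine gaps beyond what you flag. First, you assert that the infinitesimal character forces the $\frakp_{n,m}$-transition out of each $K$-type of $V$ to be nonzero "in each direction allowed," but this is not established and is precisely the kind of fact that requires work: a priori the ladder scalar between adjacent lattice points could vanish in $W_0$, making $W_0$ reducible (which, note, is not excluded by the hypotheses). Second, your claimed base case $\min(n,m)=2p$ is not actually settled — you invoke "lattice walls" pinning down the scalars, but no computation or citation supports this; the paper instead proves this case via the discrete-series characterization in a rank-one restriction, which is a different and nontrivial input. Third, and most importantly, you explicitly concede that the rigidification step — showing the two Lemma 3.1(i) subalgebra conditions uniquely determine the surviving ladder scalars — is unverified, and you propose but do not execute an induction on $\min(n,m)$. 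That step is exactly where the whole argument lives; without it, what you have is a plan rather than a proof. The paper's reduction to the rank-one spherical pair $(\frakso(n+1,\bbC), \SO(n))$ — where determination by infinitesimal character plus $K$-types is a known theorem — neatly sidesteps all three difficulties.
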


\begin{proof}
  If $m = n$, then $\theta_p^{n,n}(1)$ is a representation with scalar
  $K$-type by \eqref{eq6}. Hence it is uniquely determined by its
  infinitesimal character and $K$-types. For example see
  \cite{Zhuscaler}.  This proves that $\theta_p^{n,n}(1) = W_0$.

We now suppose that $m > n \geq 2p$.  We consider the restriction of
$W_0$ as an \linebreak $(\frakso(n+1,\bbC), K_{n,1}) \times
\rO(m-1)$-module:
\[
W_0 = \bigoplus_{\mu \in \Lambda(\rO(m-1))} W_0(\mu) \boxtimes
\mu_{\rO(m-1)}.
\]
We claim $W_0(\mu)$ and $\theta_p^{n,1}(L(\mu))$ are isomorphic
$(\frakso(n+1,\bbC), K_{n,1})$-modules. Indeed by our hypothesis,
$W_0(\mu)$ has the same $K_{n,1}$-types as
$\theta_p^{n,1}(L(\mu))$. If $n = 2p$, then $\theta_p^{n,1}(L(\mu))$
is an irreducible discrete series representation. It is a well known
result of Schmid that an irreducible discrete series representation is
uniquely determined by its $K_{n,1}$-types \cite{Sc}. Hence $W_0(\mu)
\simeq \theta_p^{n,1}(L(\mu))$. If $n > 2p$, then
$\theta_p^{n,1}(L(\mu))$ is an irreducible
$(\frakso(n+1,\bbC),\SO(n))$-module.  If we set $r = 1$, then $2p < n
+ 1 \leq m -1$ and Lemma \ref{L31}(i) applies. Hence $W_0(\mu)$ has
the same infinitesimal character as $\theta_p^{n,1}(L(\mu))$. It is
well known that irreducible $(\frakso(n+1,\bbC),\SO(n))$-modules are
characterized by its infinitesimal characters and $\SO(n)$-types. This
implies that $\theta_p^{n,1}(L(\mu))$ and $W_0$ are isomorphic
$(\frakso(n+1,\bbC),\SO(n))$-modules. They extend to the same
$K_{n,1}$-module because they have the same $K_{n,1}$-types. This
proves our claim.

Let $\theta = \thetaone$. Let $\tau^l = \rd_n^p (l +
\frac{m-n}{2} \bfone_p, \bfzero_{n-p})_{\rO(n)} \boxtimes (l,
\bfzero_{m-p})_{\rO(m)}$ denote the $K$-type of~$\theta$ in
\eqref{eq6}. Let $\tau_0^l$ denote the isomorphic $K_{n,m}$-type in
$W_0$. We would like to construct a $K_{n,m}$-module isomorphism
$\phi_l : \tau^l \rightarrow \tau_0^l$ such that $\phi := \oplus_l
\phi_l : \theta \rightarrow W_0$ becomes an
$(\frakso(n+m,\bbC),K_{n,m})$-isomorphism. This would prove the
lemma. In order to achieve this, we will follow the method in
Proposition 8.2.1(i) in \cite{Lo}.  There we construct an
$(\frakso(n+m,\bbC),K_{n,m})$-isomorphism from $\theta$ to its
Hermitian dual $\theta^h$. The construction is valid if we replace
$\theta^h$ with $W_0$. A main ingredient of the proof is the claim
above that $W_0(\mu)$ and $\theta_p^{n,1}(L(\mu))$ are isomorphic
$(\frakso(n+1,\bbC), K_{n,1})$-modules. We refer the reader to Section
8.5 in \cite{Lo} for details.

Finally suppose $m < n$. Then we interchange the role of $m$ and $n$
and proceed as before. This proves the lemma.
\end{proof}

\subsection*{Remark}
If $m \geq n = 2p$, then the above lemma could be deduced from
\cite{T} where Trapa computes the characteristic cycles and
annihilator ideal of $\theta$. It is very likely that his
method could be extended to give a proof the lemma without the
assumption on the correspondences of infinitesimal characters.

\begin{proof}[Proof of Theorem \ref{T11}]
We have proved (i) in the paragraph after \eqref{eq9}. Next we prove~(ii). 
The condition $r' \geq 2p$ implies that $\theta_p^{n,r}(L(\mu))$
could have regular infinitesimal character. Let $\Gamma_0 =
\Gamma_{\frakg,K_{n,r,r'}}^{\frakg,K_{n+r,r'}}$ be the Zuckerman
functor. We set $W_0 = \Gamma_0^{pr}(\calF \thetaone)$
to be the left hand side of \eqref{eq9}.  By Proposition \ref{P32},
$(\Gamma_{\frakg_1,K_{n,r}}^{\frakg_1,K_{n+r}})^{pr}(
\theta_p^{n,r}(L(\mu))) = \rd_{n+r}^p \xi(\mu)_{\rO(n+r)}$.  Putting
this back into \eqref{eq9} and comparing it with \eqref{eq6}, we
conclude that $W_0$ has the same $K_{n+r,r'}$-types as
$\theta_p^{n+r,r'}(1)$. By the naturality of the Zuckerman functor, the
annihilator ideal of a $U(\frakso(n+m,\bbC))$-module $V$ will also
annihilate $\Gamma_0^i (V)$. Let $I_p^{n,m}$ denote the annihilator
ideal of $\thetaone$. Then $I_p^{n,m}$ annihilates $W_0$. In
particular the correspondence of infinitesimal characters in
Lemma~\ref{L31}(i) holds for $W_0$. By Lemma~\ref{L33}, $W_0 =
\theta_p^{n+r,r'}(1)$.  This proves~(ii).

Next we prove (iii). In the proof of (ii), we have shown that
$I_p^{n,m}$ annihilates $W_0 = \theta_p^{n+r,r'}(1)$, i.e. $I_p^{n,m}
\subseteq I_p^{n+r,r'}$. Similarly if we interchange the role of $n$
and $m$, then we get$(\Gamma_{\frakg,K_{n,r,r'}}^{\frakg,K_{n,m}})^{pr}(\calF
\theta_p^{n+r,r'}) = \thetaone$ which implies $I_p^{n,m}
\supseteq I_p^{n+r,r'}$.  This proves the first assertion of
(iii). The last assertion follows immediately from \cite{T} which
states that the annihilator ideal of $\theta_p^{2p,m+n-2p}(1)$ is the
maximal primitive ideal with infinitesimal character $\rho_{n+m}- p
\bfone_{\frac{n+m}{2}}$.

We will now prove (iv). 
The correspondence of infinitesimal characters is a result of the annihilator ideal so it is independent of the
real form. 
Hence we may assume that $a_1 = 0$, $a_2 = r$, $(b_1, b_2) = (n,r')$, and  $m = a_2 + b_2 = r + r'$.
In this case we have verified the correspondence in Lemma~\ref{L31}. This proves (iv).
This also  completes the proof of Theorem~\ref{T11}.
\end{proof}

\begin{proof}[Proof of Theorem \ref{T12}]
  We set $m = r + r'$. Applying
  $(\Gamma_{\frakg,K_{n,t,m-t}}^{\frakg,K_{n+t,m-t}})^{pt}$ to
  \eqref{eq7}, we get, as $(\frakg_1,K_{n,r})\times \rO(r')$-module, 
\[
(\Gamma_{\frakg,K_{n,t,m-t}}^{\frakg,K_{n+t,m-t}})^{pt}
(\calF \thetaone) = \bigoplus_{\mu \in \Lambda(\rO(r'))}
(\Gamma_{\frakg_1,K_{n,t,r-t}}^{\frakg_1,K_{n+t,r-t}})^{pt}(\calF
\theta_p^{n,r}(L(\mu))) \boxtimes \mu_{\rO(r')}.
\]
By Theorem \ref{T11}, the left hand side is
$\theta_p^{n+\rpp,m-\rpp}(1)$.  Theorem \ref{T12} follows by comparing
the above equation with \eqref{eq7} for $\theta_p^{n+\rpp,m-\rpp}(1)$.
\end{proof}

\subsection{Outside of stable range} \label{S34} Let $m = r + r'$. We
will discuss the situation when $(p,n,m)$ is outside of the stable
range. As in the stable range case, we will multiply the local theta
lifts by suitable genuine characters to get Harish-Chandra modules
$\thetaone$ and $\theta_p^{n,r}(L(\mu))$ of the linear groups
$\rO(n,m)$ and $\rO(n,r)$ respectively. We will also refer $\thetaone$
and $\theta_p^{n,r}(L(\mu))$ as the local theta lifts.

First we look at $\thetaone$. By symmetry, we assume that $n
\leq m$. Outside the stable range, $\thetaone$ is nonzero if and
only if
\begin{enumerate}[(I)]
\item $p \leq n \leq 2p-1$ and $m = n + 2$ or

\item $n = m \leq 2p$.
\end{enumerate}

First we consider (I). Let $\rd_n$ denote the one dimensional
character of $\rO(n,n+2)$ which is $\det_{\rO(n)}$ on $\rO(n)$ and
trivial on $\rO(n+2)$. Then by Lemma 3.5.1 in \cite{Lo}, $\rd_n
\theta_p^{n,n+2}(1)$ has the same $K_{n,n+2}$-types as those of
$\theta_{n-p}^{n,n+2}(1)$ in the stable range.  By Proposition~6.3.1 in
\cite{Lo}, the correspondence of the infinitesimal characters between
$\frakso(n+1,\bbC)$ and $\frakso(m-1,\bbC) = \frakso(n+1,\bbC)$ is
given by the identity map. Hence Lemma \ref{L33} gives $\rd_n
\theta_p^{n,n+2}(1) = \theta_{n-p}^{n,n+2}(1)$, and we are back to the
stable range. Equation \eqref{eq7} is valid for $\thetaone$ so
we also have $\theta_p^{n,r}(L(\mu)) = \theta_{n-p}^{n,r}(L(\mu))$ if
$m = n+2 = r+ r'$.

For (II), $\theta_p^{n,n}(1)$ is the dual representation of
$\theta_{n-1-p}^{n,n}(1)$ in the stable range \cite{Z}.  Since all the
$K_{n,r}$-types are self dual and $-1$ belongs to the Weyl group,
Lemma \ref{L31}(i) holds for $\theta_p^{n,n}(1)$. By Lemma \ref{L33},
$\theta_p^{n,n}(1) = \theta_{n-1-p}^{n,n}(1)$, and we are back to the stable
range.

For all the cases considered so far, we could reduce the computation
to the stable range. We will leave it to the readers to formulate the
corresponding Theorems~\ref{T11} and \ref{T12}.

Unfortunately in Case (II), \eqref{eq7} is no longer valid. It is only
true if we replace $\thetaone$ and $\theta_p^{n,r}(L(\mu))$ by the
maximal Howe quotients $\Theta_p^{n,m}(1)$ and $\Theta_p^{n,r}(L(\mu))$
respectively. We refer to \cite{Lo} for the definition of maximal Howe
quotients. Then $\thetaone$ and $\theta_p^{n,r}(L(\mu))$ are
the unique irreducible quotients of $\Theta_p^{n,m}(1)$ and
$\Theta_p^{n,r}(L(\mu))$ respectively. This complicates the investigation
of the transfers of $K_{n,r}$-types for $\theta_p^{n,r}(L(\mu))$. One
could manage by a careful analysis of the submodules and the $K$-types
of $\Theta_p^{n,m}(1)$ and $\Theta_p^{n,r}(L(\mu))$ as given in
\cite{Lo}. However it is tedious so we will omit this case.

\section{Cohomologically induced modules} \label{S4}

The objective of this section is to setup some notation for computing
$A(\lambda)$ and $\overline{A}(\lambda)$ in Theorem \ref{T14}. 
We will follow the notation in \cite{KV}.

From now on, we assume that $2p \leq n \leq m$, $2p < m$ and $m = r +
r'$.  We use a subscript 0 to denote a real Lie algebra. Those without
are complex Lie algebras. Let $\frakg_1 = \frakso(n+r,\bbC)$. In
Section \ref{S13}, we have chosen a compact Cartan subalgebra
$\frakt_0$ of $\frakso(n,\bbR) \oplus \frakso(r,\bbR)$ in
$\frakso(n,r)$. Let $\frakt = \frakt_0 \otimes \bbC$ and $t = \dim
\frakt = [\frac{n}{2}] + [\frac{r}{2}]$. We choose a positive root
system $\Phi^+(\frakt)$ with respect to $\frakt$ such that the
positive roots $\varepsilon_i \pm \varepsilon_j$ for $1 \leq i < j
\leq [\frac{n}{2}]$ belongs to $\frakso(n)$, and $\varepsilon_i \pm
\varepsilon_j$ for $[\frac{n}{2}]+1 \leq i < j \leq [\frac{n}{2}] +
[\frac{r}{2}]$ belongs to~$\frakso(r)$.

Let $\lambda_0 = (\bfone_p, \bfzero_{t-p})$ and $\lambda_0' =
(p,p-1,\ldots,1, \bfzero_{t-p}) \in \sqrt{-1}\frakt_{0}^{\ast}$. Let
$\frakq =\frakl \oplus \frakn$ and $\frakq' =\frakl' \oplus \frakn'$
be the $\theta$-stable parabolic subalgebras in $\frakg_1$ defined by
$\lambda_0$ and $\lambda_0'$ respectively. We have $\frakl_0 =
\fraku(p) \oplus \frakso(n-2p,r)$ and $\frakl_0' = \fraku(1)^p \oplus
\frakso(n-2p,r)$. Let
\[
L = \rU(p)\times \rO(n-2p,r) \mbox{ and } L' = \rU(1)^p \times
\rO(n-2p,r)
\]
be the subgroups in $\rO(n,r)$ whose Lie algebras are $\frakl_0$ and
$\frakl_0'$ respectively. Let $M = \rU(p) \times K_{n-2p,r}$ and $M' =
\rU(1)^p \times K_{n-2p,r}$ be maximal compact subgroups of $L$ and $L'$
respectively.  We note that $\frakq'$, $L'$ and $M'$ are the same as those
in Section \ref{S13}.

\medskip

Under the adjoint action of $L$, the radical $\frakn$ of $\frakq$
decomposes as
\[
\frakn = \wedge^2(\bbC^p) \oplus (\bbC^p \otimes \bbC^{n-2p+r})
\]
where $\bbC^p$ is the standard representation of $\rU(p)$ and
$\bbC^{n-2p+r}$ is the standard representation of $\rO(n-2p,r)$. Hence
\begin{equation}\label{eq14}
\wedge^{\mathrm{top}}\frakn \cong \det{}_{\rU(p)}^{n+r-p-1}
 \otimes\det{}_{\rO(n-2p,r)}^p.
\end{equation}

Let $\mu = (\mu_1, \ldots, \mu_{m-r})$ be a highest weight of $\rO(m-r)$
such that $\mu_i = 0$ for $i > p$. We set $Z = Z(\lambda)$ to be the
irreducible finite dimensional $\rU(p)$-module of highest weight
\begin{equation} \label{eq15}
\lambda = \lambda_\mu = (\mu_1, \ldots, \mu_p) + \frac{m-n-2r}{2}
\bfone_p.
\end{equation}

We let $\rO(n-2p,r)$ act on it by $\det_{\rO(n-2p,r)}^p$. We continue
to denote its highest $\frakt$-weight by~$\lambda$.  Let
$\overline{\frakq}$ denote the parabolic subalgebra that is opposite
to $\frakq$. We put $Z^\sharp = Z \otimes \wedge^{\mathrm{top}}
\frakn$. Let $M = L\cap K_{n,r}\cong \rU(p)\times
  \rO(n-2p)\times \rO(r)$. We extend $Z^\sharp$ to a $(\overline{\frakq},M)$-module 
where $\overline{\frakn}$ acts trivially. We define the $(\frakg_1, M)$-module
\[
\ind_{\bar{\frakq}, M}^{\frakg_1, M} Z^\sharp = U(\frakg_1)
\otimes_{\bar{\frakq}} Z^\sharp.
\]
We will write $\ind Z^\sharp$ if there is no fear of confusion.

We set $s_0 = \dim (\frakn \cap \frakk) = p(n-2p) + \frac{p(p-1)}{2}$.
Note that $s_0$ is independent of $r$. Let $s_1 = \frac{p(p-1)}{2}$ be
the dimension of a maximal nilpotent subalgebra of $\frakgl(p)$.  Let
$K = K_{n,r}$ and let $(\Pi_{\frakg_1,M}^{\frakg_1,K})_j$ be the
$j$-th derived functor of the Bernstein functor. Let $\bbC_\lambda$ be
a character of~$L'$ as in Section \ref{S13} and let
$\bbC_\lambda^\sharp = \bbC_\lambda \otimes \wedge^{\mathrm{top}}
\frakn'$. We recall $A(\lambda)$ in Section \ref{S13}. We claim that
\begin{equation} \label{eq16}
A(\lambda) = (\Pi_{\frakg_1,M'}^{\frakg_1,K})_{s_0+s_1}
(\ind_{\overline{\frakq'},M'}^{\frakg_1,M'} \bbC_\lambda^\sharp) =
(\Pi_{\frakg_1,M}^{\frakg_1,K})_{s_0}(\ind Z^\sharp).
\end{equation}
The claim follows from the Borel-Weil-Bott-Kostant theorem and a
standard spectral sequence argument. For example see Corollary
11.86(a) on page 683 in~\cite{KV}.

Instead of working with Bernstein functor modules, we prefer to use
Zuckerman functor modules. By the Hard Duality Theorem 3.5 in
\cite{KV}, we have
\begin{equation} \label{eq17}
A(\lambda) = (\Gamma_{\frakg_1,M}^{\frakg_1,K})^{s_0} \ind Z^{\sharp}.
\end{equation}
The module $A(\lambda)$ has infinitesimal character
$\lambda + \rho(\frakg_1)$.

\begin{lemma} \label{L41}
  Let $\lambda = \lambda_\mu = \mu + \frac{m-n-2r}{2} \bfone_p$, $Z$
  and $A(\lambda)$ as above.
\begin{enumerate}[(i)]
\item The module $A(\lambda)$ is $\rO(n)$-admissible. An $\rO(n)$-type
  of $A(\lambda)$ has highest weight
\[
\rd_n^p \left(\mu_1 + \kappa_1 + \frac{m-n}{2}, \ldots, \mu_p +
  \kappa_p + \frac{m-n}{2}, \bfzero_{n-p} \right)
\]
where $\kappa_i$ are non-negative integers.

\item The module $A(\lambda)$ contains the $K_{n,r}$-type $\tau_{\min}
  = \rd_n^p (\mu + \frac{m-n}{2} \bfone_p)_{\rO(n)} \otimes
  \bbC_{\rO(m)}$ with multiplicity one. It is the minimal
  $K_{n,r}$-type. It is also the image of the bottom layer map.
\end{enumerate}
\end{lemma}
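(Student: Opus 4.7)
\textbf{Proof plan for Lemma \ref{L41}.}

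The plan is to analyze the $K_{n,r}$-type structure of $A(\lambda) = (\Gamma^{s_0}_{\frakg_1, M}^{\frakg_1, K})\ind Z^\sharp$ via the bottom layer / Vogan--Zuckerman machinery. First I would identify $Z^\sharp = Z \otimes \wedge^{\mathrm{top}} \frakn$ as an $M = \rU(p) \times \rO(n-2p) \times \rO(r)$-module: by \eqref{eq14} and the definition of $Z$, its $\rU(p)$-highest weight is $\lambda + (n+r-p-1)\bfone_p$, while $\rO(n-2p) \times \rO(r)$ acts by $\det^{2p}$ on each factor, which is trivial since $2p$ is even. I would also identify $\frakn \cap \frakp \cong \bbC^p \otimes \bbC^r$ as $M$-module and compute $2\rho(\frakn \cap \frakp) = r\bfone_p$ on the $\rU(p)$-coordinates (and zero elsewhere).

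For part (ii), I would invoke the bottom layer theorem from \cite{KV} \S 5: the minimum $K$-type of $A(\lambda)$ is obtained by shifting the $L\cap K$-highest weight of $Z$ by $2\rho(\frakn \cap \frakp)$. On the $\rU(p)$-factor this gives $\lambda + r\bfone_p = \mu + \frac{m-n}{2}\bfone_p$, which extends via $\rU(p) \subset \rO(2p) \subset \rO(n)$ to the $\rO(n)$-highest weight $(\mu + \frac{m-n}{2}\bfone_p, \bfzero_{n-p})$. Tracking the twist $\det_{\rO(n-2p,r)}^p$ on $Z$ through the extension yields the $\rd_n^p$ factor on $\rO(n)$ together with the trivial $\rO(r)$-type, producing exactly $\tau_{\min}$. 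Multiplicity one and the identification of $\tau_{\min}$ as the image of the bottom layer map are standard consequences of the bottom layer theorem.

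For part (i), the Vogan--Zuckerman characterization asserts that every $K$-type of $A(\lambda)$ has highest weight differing from $\tau_{\min}$ by a non-negative integer combination of weights in $\frakn \cap \frakp$. Since each such weight contributes $+\varepsilon_i$ for some $i \leq p$ on the $\rO(n)$-factor (and $\pm \varepsilon_k$ on the $\rO(r)$-factor), every $\rO(n)$-type of $A(\lambda)$ has highest weight of the claimed form with $\kappa_i \geq 0$. The $\rO(n)$-admissibility then follows from Kobayashi's discrete decomposability criterion: the real Levi subalgebra $\frakl_0 = \fraku(p) \oplus \frakso(n-2p,r)$ meets $\mathrm{Lie}(\rO(n))$ in the compact subalgebra $\fraku(p) \oplus \frakso(n-2p)$, which guarantees that the cohomologically induced module $A(\lambda) = A_\frakq(Z)$ is $\rO(n)$-admissible.

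The main obstacle will be adapting the bottom layer and Vogan--Zuckerman formulas---typically stated for connected $K$---to the disconnected $K = \rO(n) \times \rO(r)$ setting discussed in Section \ref{S22}, and carefully tracking the determinant twists through the construction to recover the precise $\rd_n^p$ factor in the claimed $\rO(n)$-highest weights.
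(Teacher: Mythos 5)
Your proposal takes a genuinely different route from the paper's proof, though it aligns with the \emph{alternative} the paper explicitly flags in the remark after Lemma~\ref{L41} (``Alternatively one could also verify this lemma directly using the Blattner's formula \dots\ The fact that $A(\lambda)$ is admissible with respect to $\SO(n)$ also follows from a very general criterion in [Ko].''). The paper's own proof is more hands-on: it builds the explicit $(\frakg_1, \rU(p)\times K_{n-2p,r})\times\rO(m-r)$-filtration $\{F_N\}$ of $\ind Z^\sharp$ (Lemma~\ref{L51}), decomposes $\ind Z^\sharp$ into irreducible generalized Verma modules $V_0(\mu')$ via Humphreys' irreducibility criterion (Case~1), shows via Lemma~\ref{L53} that each $V_r(\mu)$ is a direct sum of $V_0(\mu+\kappa)$ with $\kappa\ge0$ and that $V_0(\mu)$ occurs exactly once with trivial $\rO(r)$-action, and then applies the Borel--Weil--Bott--Kostant theorem termwise in \eqref{eq24}. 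Both $\rO(n)$-admissibility and multiplicity one drop out of the explicit direct-sum decomposition. The reason the authors prefer this route is stated at the top of Section~\ref{S5}: the filtration machinery (especially Lemmas~\ref{L52} and \ref{L53}) is reused in an essential way in the proof of Theorem~\ref{T14}(ii), where they must identify which graded piece $V_r(\mu')$ carries a given subquotient. Your bottom-layer argument proves Lemma~\ref{L41} but would not yield that machinery.

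Two places where your plan needs more care. First, part~(i): the clean assertion that ``every $K$-type of $A(\lambda)$ has highest weight differing from $\tau_{\min}$ by a non-negative integer combination of weights in $\frakn\cap\frakp$'' is not literally the Vogan--Zuckerman statement and does not follow formally from Blattner's formula (Theorem~5.64 in \cite{KV}) without argument, since that formula is an alternating sum over $W(\frakl\cap\frakk)$ and could in principle exhibit cancellation; the paper sidesteps this by producing the direct-sum decomposition \eqref{eq24} outright. You should note that the parameter $\lambda=\mu+\frac{m-n-2r}{2}\bfone_p$ is generally \emph{not} in the good or weakly good range (the paper says so after Theorem~\ref{T14}), so the hypotheses under which bottom-layer multiplicity-one and the Vogan--Zuckerman $K$-type description are usually quoted must be checked by hand. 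Second, your invocation of Kobayashi's criterion is too loose: $\frakl_0\cap\frakso(n)=\fraku(p)\oplus\frakso(n-2p)$ being compact is not by itself Kobayashi's hypothesis. The criterion in \cite{Ko} is phrased in terms of the asymptotic $K$-support/momentum set of $A_\frakq(\lambda)$ meeting a certain cone trivially; it does apply here, but you would need to verify the cone condition (or, as the paper effectively does, deduce admissibility directly from the explicit $\rO(n)$-type formula). Your arithmetic for $2\rho(\frakn\cap\frakp)=r\bfone_p$ and the identification $\lambda + r\bfone_p = \mu + \frac{m-n}{2}\bfone_p$ are correct and match \eqref{eq11}.
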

In (ii), we refer to page 642 in \cite{KV} for the
  definition of minimal $K$-types for disconnected~$K$.

We postpone the proof of the above lemma to Section \ref{S54}.
Alternatively one could also verify this lemma directly using the
Blattner's formula (see Theorem 5.64 in \cite{KV}).  The fact that
$A(\lambda)$ is admissible with respect to $\SO(n)$ also follows from
a very general criterion in~\cite{Ko}. 

By (ii), $\overline{A}(\lambda)$ is the irreducible
$(\frakg_1,K_{n,r})$-subquotient of $A(\lambda)$ generated by the
minimal $K_{n,r}$-type $\tau_{\min}$ in (ii).

\medskip

Let $K = K_{n,r}$ and $K^1 = M K_{n,r}^0$. If $n > 2p$
then $K^1 = K$.  If $n = 2p$ then $K^1 = \SO(n)
\times \rO(r)$ is a subgroup of index two in $K$.  Let
$\calC(\frakg_1, K)$ denote the category of $(\frakg_1,
K)$-modules etc... For $V \in \calC(\frakg_1,K^1)$, we
define
\[
{\mathrm{induced}}_{\frakg_1,K^1}^{\frakg_1,K} V = \{ f : K
\rightarrow V : f(k_1k) = k_1 f(k) \mbox{ for all } k_1 \in K^1, k \in
K \}.
\]
Applying Proposition 2.77 in \cite{KV} to \eqref{eq17} gives 
\begin{equation} \label{eq18}
A(\lambda) = {\mathrm{induced}}_{\frakg_1,K^1}^{\frakg_1,K}
(\Gamma_{\frakg_1,M}^{\frakg_1,K^1})^{s_0}(\ind Z^\sharp)
\end{equation}
where $(\Gamma_{\frakg_1,M}^{\frakg_1,K^1})^{s_0}$ is the
$s_0$-th derived functor of the Zuckerman functor. Let
$\calC(\frakg_1, K)$ denote the category of $(\frakg_1,
K)$-modules etc...  The above equation shows that we could
compute $A(\lambda)$ first in $\calC(\frakg_1, K^1)$ and then
apply the exact functor
${\mathrm{induced}}_{\frakg_1,K^1}^{\frakg_1,K}$.

Given a module $V$ in $\calC(\frakg_1,K)$, we let
$\calF_{\frakg_1,K}^{\frakg_1,K^0} V \in \calC(\frakg_1,K^0)$ denote
the restriction of $V$ as a $(\frakg_1,K^0)$-module. We recall the
$(\frakg_1,K^0)$-module $A_{\frakq'}(\lambda) =
(P_{\overline{\frakq'},(M')^0}^{\frakg,K^0})_{s_0'}
\bbC_\lambda^\sharp$ on page 330 in \cite{KV}.

\begin{prop} \label{P42}
We have $A_{\frakq'}(\lambda) =
\calF_{\frakg_1,K^1}^{\frakg_1,K^0}
(\Gamma_{\frakg_1,M}^{\frakg_1,K^1})^{s_0}(\ind Z^\sharp)
\subseteq \calF_{\frakg_1,K}^{\frakg_1,K^0} A(\lambda)$.
\end{prop}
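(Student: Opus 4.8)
The plan is to unwind the two descriptions of $A(\lambda)$ in \eqref{eq17} and \eqref{eq18} and compare them, layer by layer, with the analogous construction of $A_{\frakq'}(\lambda)$ on the level of connected groups. First I would observe that, by \eqref{eq16} and the Hard Duality Theorem, $A(\lambda)$ is also computed as $(\Gamma_{\frakg_1,M'}^{\frakg_1,K})^{s_0+s_1}(\ind_{\overline{\frakq'},M'}^{\frakg_1,M'}\bbC_\lambda^\sharp)$, since passing from $\frakq$ to $\frakq'$ only replaces $\rU(p)$ by $\rU(1)^p$ and correspondingly shifts the degree by $s_1 = \dim(\frakk\cap\frakl\cap\frakn)$ — this is exactly the standard spectral-sequence comparison cited from Corollary 11.86(a) of \cite{KV}. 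So it suffices to track the $\frakq'$-version throughout. The key point is that the definition of $A_{\frakq'}(\lambda)$ on page 330 of \cite{KV} is literally the same cohomological-induction recipe but with $M'$ replaced by $(M')^0 = \rU(1)^p\times\SO(n-2p)\times\SO(r)$ and the target group $K$ replaced by $K^0 = \SO(n)\times\SO(r)$.

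Second, I would establish the middle equality of the proposition, namely that the $(\frakg_1,K^1)$-module $(\Gamma_{\frakg_1,M}^{\frakg_1,K^1})^{s_0}(\ind Z^\sharp)$ restricts to $A_{\frakq'}(\lambda)$ as a $(\frakg_1,K^0)$-module. Here one uses the compatibility of the Zuckerman functor with restriction to subgroups of finite index: since $K^0 \subseteq K^1$ is of index dividing $4$ (index $2$ when $n>2p$ via $\SO(r)\subset\rO(r)$, and an extra factor when $n=2p$), and likewise $(M')^0\subseteq M'$, the derived Zuckerman functor commutes with the forgetful functor by the naturality results in Chapter 6 of \cite{Wa1} (the same naturality already invoked in Section~\ref{S22} and in \eqref{eq9}). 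Concretely, $\calF_{\frakg_1,K^1}^{\frakg_1,K^0}\circ(\Gamma_{\frakg_1,M}^{\frakg_1,K^1})^{s_0} \cong (\Gamma_{\frakg_1,(M')^0}^{\frakg_1,K^0})^{s_0}\circ\calF_{\frakg_1,M}^{\frakg_1,(M')^0}$, and restricting $\ind Z^\sharp$ from $M$ to $(M')^0$ together with the $\frakq\to\frakq'$ degree shift gives precisely the defining expression for $A_{\frakq'}(\lambda)$. One must also check the infinitesimal-character bookkeeping: both sides carry infinitesimal character $\lambda+\rho(\frakg_1)$, consistent with the last line before Lemma~\ref{L41}.

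Third, for the inclusion $A_{\frakq'}(\lambda)\subseteq\calF_{\frakg_1,K}^{\frakg_1,K^0}A(\lambda)$, I would apply the forgetful functor $\calF_{\frakg_1,K}^{\frakg_1,K^0}$ to \eqref{eq18}. Since ${\mathrm{induced}}_{\frakg_1,K^1}^{\frakg_1,K}V$, as a $K$-module, is a direct sum of $[K:K^1]$ conjugates of $V$, restricting it back to $K^0$ (which is already inside $K^1$) exhibits $\calF_{\frakg_1,K^1}^{\frakg_1,K^0}V$ as a direct summand — in particular a submodule — of $\calF_{\frakg_1,K}^{\frakg_1,K^0}A(\lambda)$. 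Combining with the identification from the previous step gives $A_{\frakq'}(\lambda) = \calF_{\frakg_1,K^1}^{\frakg_1,K^0}(\Gamma_{\frakg_1,M}^{\frakg_1,K^1})^{s_0}(\ind Z^\sharp)\subseteq\calF_{\frakg_1,K}^{\frakg_1,K^0}A(\lambda)$, which is the assertion.

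The main obstacle I anticipate is the careful treatment of disconnectedness in the case $n=2p$, where $K^1 = \SO(n)\times\rO(r)$ is strictly smaller than $K$ and $(M')^0$ differs from $M'$ on two factors: one has to make sure the degree shifts $s_0, s_1$ and the $\wedge^{\mathrm{top}}$ twists $Z^\sharp$, $\bbC_\lambda^\sharp$ match exactly what \cite{KV} uses on page 330 for the connected-group module $A_{\frakq'}(\lambda)$, and that the "induced" functor interacts correctly with the various $\det$-characters (note $Z$ carries a $\det_{\rO(n-2p,r)}^p$ twist, which restricts trivially to $\SO$). Verifying that no spurious sign character is introduced when passing between $K^1$ and $K^0$ — equivalently, that the relevant bottom-layer $K^0$-type, namely the $\SO(n)\times\SO(r)$-type underlying $\tau_{\min}$ of Lemma~\ref{L41}(ii), really does occur in $A_{\frakq'}(\lambda)$ — is the delicate bookkeeping step, but it is routine given the explicit weight data in \eqref{eq14}, \eqref{eq15} and Lemma~\ref{L41}.
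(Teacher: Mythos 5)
Your overall strategy coincides with the paper's: the inclusion is obtained exactly as in the paper by applying $\calF_{\frakg_1,K}^{\frakg_1,K^0}$ to \eqref{eq18} and noting that the induced module, restricted to $K^0$, contains $\calF_{\frakg_1,K^1}^{\frakg_1,K^0}(\Gamma_{\frakg_1,M}^{\frakg_1,K^1})^{s_0}(\ind Z^\sharp)$ as a direct summand; and the first equality is meant to come from commuting the derived Zuckerman functor with restriction to a finite-index subgroup together with the $\frakq\leftrightarrow\frakq'$ spectral-sequence comparison. However, the one identity you actually display for the key step is false as written. The commutation of $\Gamma$ with restriction along $K^0\subseteq K^1$ intersects the \emph{lower} group as well: the correct statement is
\[
\calF_{\frakg_1,K^1}^{\frakg_1,K^0}\circ(\Gamma_{\frakg_1,M}^{\frakg_1,K^1})^{s_0}
=(\Gamma_{\frakg_1,M^0}^{\frakg_1,K^0})^{s_0}\circ\calF_{\frakg_1,M}^{\frakg_1,M^0},
\qquad M^0=M\cap K^0=\rU(p)\times\SO(n-2p)\times\SO(r),
\]
whereas you wrote $(M')^0=\rU(1)^p\times\SO(n-2p)\times\SO(r)$ on the right with the degree $s_0$ unchanged. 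Since $(M')^0\neq M\cap K^0$ (the Levi factor $\rU(p)$ is replaced by $\rU(1)^p$), your identity does not hold degree by degree; replacing $\rU(p)$ by $\rU(1)^p$ necessarily shifts the cohomological degree by $s_1=\dim(\frakk\cap\frakl\cap\frakn')$, and appending the phrase ``together with the $\frakq\to\frakq'$ degree shift'' to a formula in which both sides carry the same degree $s_0$ does not repair it. Also note the relevant reference is Vogan, Section 6.2, or Borel--Wallach I.8.9, not the Chapter 6 naturality of \cite{Wa1}, which concerns compatibility with the $\frakg_1$-action rather than with finite-index restriction of the compact groups.

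The clean repair, which is what the paper does, is to keep the two ingredients separate: first rewrite $A_{\frakq'}(\lambda)$, defined on page 330 of \cite{KV} via $\overline{\frakq'}$, $(M')^0$ and degree $s_0'=s_0+s_1$, in the form $(\Gamma_{\frakg_1,M^0}^{\frakg_1,K^0})^{s_0}(\ind_{\overline{\frakq},M^0}^{\frakg_1,M^0}Z^\sharp)$ by the Hard Duality Theorem and the spectral sequence of Corollary 11.86(a) in \cite{KV} (this is the connected-group analogue of the passage from \eqref{eq16} to \eqref{eq17}); only then apply the commutation identity above to $\ind Z^\sharp$ and compare. A minor further slip: when $n>2p$ one has $K^1=K$, so $[K^1:K^0]=4$, and $[K^1:K^0]=2$ only in the case $n=2p$ — the opposite of what you state; this does not affect the argument but indicates the disconnectedness bookkeeping needs more care.
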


If $n > 2p$, then $K = K^1$ and $A_{\frakq'}(\lambda) =
\calF_{\frakg_1,K}^{\frakg_1,K^0} A(\lambda)$.

\begin{proof}
The inclusion on the right follows from \eqref{eq18}.  By the
spectral sequence and the Hard Duality Theorem (cf. \eqref{eq16} and
\eqref{eq17}), we have
\begin{equation} \label{eq19}
A_{\frakq'}(\lambda) = (\Pi_{\frakg_1,M^0}^{\frakg_1,K^0})_{s_0}(
\ind_{\bar{\frakq}, M^0}^{\frakg_1, M^0}
Z^\sharp)
= (\Gamma_{\frakg_1,M^0}^{\frakg_1,K^0})^{s_0}(
\ind_{\bar{\frakq}, M^0}^{\frakg_1, M^0} Z^\sharp).
\end{equation}
Note that $M \cap K^0$ is the identity connected component $M^0$ of
$M$.  By Section 6.2 of \cite{Vo} or Section I.8.9 in \cite{BW},
\begin{equation} \label{eq20}
\calF_{\frakg_1,K^1}^{\frakg_1,K^0} \circ
(\Gamma_{\frakg_1,M}^{\frakg_1,K^1} )^{s_0}=
(\Gamma_{\frakg_1,M^0}^{\frakg_1,K^0})^{s_0} \circ
\calF_{\frakg_1,M}^{\frakg_1,M^0}.
\end{equation}
Applying this functor to $\ind Z^\sharp$ and comparing it with
\eqref{eq19} gives the first equality of the proposition.
\end{proof}

\subsection{} \label{S43} Although the group $M$ is different from
$K_{n,r,m-r}$, Section~\ref{S22} applies to
$\Gamma_{\frakg_1,M}^{\frakg_1,K} V$. More precisely, let $V \in
\calC(\frakg_1,M)$.  We define $(\Gamma_M^K)^i V = H^i(\frakk,M; V
\otimes \calH(K))$ where $\calH(K)$ denotes the bi-$K$-finite
continuous functions on~$K$. Then $(\Gamma_M^K)^i V$ has a natural
$(\frakg_1,K)$-module structure and $(\Gamma_M^K)^i V =
(\Gamma_{\frakg_1,M}^{\frakg_1,K})^i V$ in $\calC(\frakg_1,K)$. Since $K/M
\simeq \rO(n)/(\rU(p) \times \rO(n-2p))$, one could naturally identify
$(\Gamma_M^K)^i \simeq (\Gamma_{\rU(p) \times
  \rO(n-2p)}^{\rO(n)})^i$. This allows us to compute
$(\Gamma_{\frakg_1,M}^{\frakg_1,K})^i V$ within the category
$\calC(\frakso(n,\bbC), \rU(p) \times \rO(n-2p))$.

\section{Proof of Theorem \ref{T14}} \label{S5}

This section contains the proofs of Lemma \ref{L41} and Theorem
\ref{T14}. The initial argument leading to Lemma \ref{L55} follows
Section 4 in \cite{LS} so we will omit details and refer the reader
to that paper.  Unfortunately we cannot totally dispense with it
because we will need them later in the proof of Theorem \ref{T14}.

\medskip

We set $\frakg = \frakso(n+m,\bbC)$ and $\frakg_1 =
\frakso(n+r,\bbC)$.  We recall the theta stable parabolic subalgebra
$\frakq = \frakl \oplus \frakn$ of $\frakg$ where $\frakl =
\frakgl(p,\bbC) \oplus \frakso(n+m-2p,\bbC)$. We define $\frakq_1 =
\frakq \cap \frakso(n+r,\bbC) = \frakl_1 \oplus \frakn_1$. We have a
decomposition $\frakn=\frakn_1 \oplus \frakn_2$ such that $\frakn_2 =
\bbC^p \otimes \bbC^{m-r}$ is a tensor product of standard
representations of $\rU(p)$ and $\rO(m-r)$, while the group
$\rO(n-2p,r)$ acts trivially on it. We extend $\frakn_2$ to a
representation of $\rU(p) \times \rU(m-r)$. It is well known that (see
\cite{GoW} and \cite{H3})
\[
{\mathrm{Sym}}^N \frakn_2 = \sum_{\mu} \mu_{\rU(p)} \otimes
\mu_{\rU(m-r)}
\]
where the sum is taken over all partitions $\mu$ of $N$ such that the
length of $\mu$ is not longer than $\min(p,m-r)$.  We restrict the
summand $\mu_{\rU(m-r)}$ to $\rO(m-r)$ and we denote it by
$\mu_{\rU(m-r)}|_{\rO(m-r)}$.

With reference to \eqref{eq14} and \eqref{eq15}, let $\lambda =
-\frac{n+m}{2} \bfone_p$, $Z = \det_{\rU(p)}^{-\frac{n+m}{2}} \otimes
\det_{\rO(n-2p,m)}^p$ and
\[
Z^{\sharp} = \det{}_{\rU(p)}^{\frac{n+m}{2}-p-1}\otimes
\bbC_{\rO(n-2p,m)}.
\]
Let $\symm : \Sym(\frakg) \rightarrow U(\frakg)$ denote the
symmetrization map (see \S 0.4.2 in \cite{Wa1}). Let $M = \rU(p)
\times K_{n-2p,m}$.  By the Poincare-Birkhoff-Witt theorem,
\begin{equation} \label{eq21} \ind Z^\sharp =
  \ind_{\overline{\frakq},M}^{\frakg,M} Z^\sharp= U(\frakg)
  \otimes_{\bar{\frakq}} Z^\sharp = U(\frakn_1) \otimes
  \symm(\Sym(\frakn_2)) \otimes Z^\sharp
\end{equation}
as $\rU(p) \times K_{n-2p,r} \times \rO(m-r)$-module.  We define $F_N$
to be the $\frakg_1$-submodule of $\ind Z^\sharp$ generated by $1
\otimes \symm(S_N(\frakn_2)) \otimes Z^\sharp$. Hence $\{ F_N : N = 0,
1, 2, \ldots \}$ forms an exhaustive increasing filtration of
$(\frakg_1, \rU(p) \times K_{n-2p,r}) \times \rO(m-r)$-submodules of
$\ind Z^\sharp$.  We set
\[
V_r(\mu)  = \ind_{(\bar{\frakq}_1, \rU(p) \times
  K_{n-2p,r})}^{(\frakg_1, \rU(p) \times
  K_{n-2p,r})} \calF_{\frakl_1, \rU(p)\times
    K_{n-2p,r}}^{\bar{\frakq}_1,\rU(p)\times K_{n-2q,r}}  \left(
(\mu + (\frac{m+n}{2}-2p)\bfone_p)_{\rU(p)} \otimes \bbC_{\rO(n-2p,r)} \right).
\]

Let $\calB$ (resp. $\calB(N)$) denote the partitions (resp. partitions
of $N$) of length not more than $\min(p,m-r)$.  We will now state a
special case of a known fact which is used in proof of the Blattner's
formula in \cite{KV}.
\begin{lemma} \label{L51}
  For every positive integer $N$, we have an isomorphism of
  \\ $(\frakg_1,\rU(p) \times K_{n-2p,r}) \times
  \rO(m-r)$-modules
\[
F_N/F_{N-1} = \bigoplus_{\mu \in \calB(N)} V_r(\mu) \otimes
(\mu_{\rU(m-r)} |_{\rO(m-r)}). \ \qed
\]
\end{lemma}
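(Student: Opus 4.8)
The plan is to run the ``standard filtration'' argument that underlies the proof of the Blattner formula in \cite{KV} (see also Section 4 of \cite{LS}), specialized to the pair $(\frakg,\frakg_1)=(\frakso(n+m,\bbC),\frakso(n+r,\bbC))$ and to the complementary nilradical $\frakn_2=\bbC^p\otimes\bbC^{m-r}$. By \eqref{eq21} the Poincar\'e--Birkhoff--Witt theorem already identifies $\ind Z^\sharp$, as a $\rU(p)\times K_{n-2p,r}\times\rO(m-r)$-module, with $U(\frakn_1)\otimes\symm(\Sym(\frakn_2))\otimes Z^\sharp$. The first step is to observe that $F_N$ is precisely the subspace $U(\frakn_1)\otimes\symm(\Sym^{\le N}(\frakn_2))\otimes Z^\sharp$, which is a $(\frakg_1,\rU(p)\times K_{n-2p,r})\times\rO(m-r)$-submodule: $\frakn_1$ acts by left multiplication on the $U(\frakn_1)$-factor; $\frakl_1$ acts by the adjoint action on $U(\frakn_1)$ tensored with its natural action on $\Sym(\frakn_2)\otimes Z^\sharp$ (through the $\frakgl(p)$-factor only, since $\frakso(n-2p+r,\bbC)$ acts trivially on both $\frakn_2$ and $Z^\sharp$), and these preserve the $\frakn_2$-degree; and for $X\in\bar{\frakn}_1$ one commutes $X$ to the right, the commutators with the $\frakn_1$- and $\frakn_2$-factors either staying inside $\frakg_1$ with $\frakn_2$-degree no larger than before, or landing in the $\frakso(n+m-2p,\bbC)$-part of $\frakl$ (which annihilates $Z^\sharp$) or in $\bar{\frakn}_2$ (which strictly lowers the $\frakn_2$-degree). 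The same bookkeeping shows that on the associated graded all of these commutator corrections vanish.

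Consequently $F_N/F_{N-1}$ is, as a $(\frakg_1,\rU(p)\times K_{n-2p,r})\times\rO(m-r)$-module, $U(\frakn_1)\otimes_{\bbC}\Sym^N(\frakn_2)\otimes_{\bbC}Z^\sharp$ with $\frakn_1$ acting freely on the left, $\frakl_1$ acting diagonally, and $\bar{\frakn}_1$ annihilating $1\otimes\Sym^N(\frakn_2)\otimes Z^\sharp$; equivalently it is $\ind_{\bar{\frakq}_1,\,\rU(p)\times K_{n-2p,r}}^{\frakg_1,\,\rU(p)\times K_{n-2p,r}}$ applied to the $\frakl_1$-module obtained from $\Sym^N(\frakn_2)\otimes Z^\sharp$, with $\rO(m-r)$ a spectator. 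The one point requiring care here is the normalization of that $\frakl_1$-module: the exponent $\tfrac{m+n}{2}-p-1$ visibly carried by $Z^\sharp$ (via \eqref{eq14}) must be reconciled with the exponent $\tfrac{m+n}{2}-2p$ appearing in the definition of $V_r(\mu)$, the difference being the $\rho$-shift produced by this Mackey-type identification, so that the $\frakl_1$-character that actually occurs is $\det_{\rU(p)}^{(m+n)/2-2p}\otimes\bbC_{\rO(n-2p,r)}$.

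It then remains to decompose $\Sym^N(\frakn_2)\otimes Z^\sharp$ as an $\frakl_1$-module and reassemble. Since $\bbC^p$ is the standard $\rU(p)$-module and $\bbC^{m-r}$ the standard $\rO(m-r)$-module, the decomposition of $\Sym^N(\frakn_2)$ recalled before the lemma gives $\Sym^N(\frakn_2)=\bigoplus_{\mu\in\calB(N)}\mu_{\rU(p)}\otimes(\mu_{\rU(m-r)}|_{\rO(m-r)})$; tensoring the $\rU(p)$-factor with the character just described replaces the highest weight $\mu$ by $\mu+(\tfrac{m+n}{2}-2p)\bfone_p$ and leaves the $\bbC_{\rO(n-2p,r)}$-factor untouched. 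Since $\ind_{\bar{\frakq}_1,\,\rU(p)\times K_{n-2p,r}}^{\frakg_1,\,\rU(p)\times K_{n-2p,r}}$ is exact ($U(\frakg_1)$ is free over $U(\bar{\frakq}_1)$ by PBW), applying it termwise and comparing with the definition of $V_r(\mu)$ yields $F_N/F_{N-1}=\bigoplus_{\mu\in\calB(N)}V_r(\mu)\otimes(\mu_{\rU(m-r)}|_{\rO(m-r)})$, as asserted.

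The step I expect to be the main obstacle is the identification in the second paragraph: verifying precisely that after passing to the associated graded the $(\frakg_1,\rU(p)\times K_{n-2p,r})$-module structure degenerates to the naive induced one (in particular that the $\bar{\frakn}_1$-action, which involves genuine commutator corrections beforehand, really does kill $1\otimes\Sym^N(\frakn_2)\otimes Z^\sharp$ in the graded), together with the $\wedge^{\mathrm{top}}$/$\rho$-bookkeeping that fixes the exponent in $V_r(\mu)$ at exactly $\tfrac{m+n}{2}-2p$. As all of this is a routine specialization of the filtration machinery used for the Blattner formula in \cite{KV}, in the write-up I would invoke that machinery and spell out only the input specific to $\frakn_2=\bbC^p\otimes\bbC^{m-r}$.
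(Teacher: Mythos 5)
Your proposal is correct and is exactly the argument the paper intends: the paper gives no written proof of Lemma \ref{L51}, presenting it as a special case of the filtration machinery behind the Blattner formula in \cite{KV}, and that is precisely what you carry out (PBW, grading by $\frakn_2$-degree, degeneration of the $\bar{\frakn}_1$-action on the graded, and the $\Sym(\bbC^p\otimes\bbC^{m-r})$ decomposition). One correction on the point you single out as the main obstacle: the plain PBW identification $U(\frakg_1)\otimes_{\bar{\frakq}_1}W\cong U(\frakn_1)\otimes W$ introduces no $\rho$-shift, so the $\rU(p)$-weight actually occurring on the generating $\frakl_1$-module of $F_N/F_{N-1}$ is $\mu+(\tfrac{m+n}{2}-p-1)\bfone_p$, exactly as carried by $\Sym^N(\frakn_2)\otimes Z^\sharp$; this is also the normalization forced by the rest of Section 5 (e.g.\ the claim that $V_0(\mu)$ has the infinitesimal character of $(\mu+\tfrac{m-n}{2}\bfone_p)_{\rO(n)}$ requires the exponent $c$ to satisfy $c-(n-p-1)=\tfrac{m-n}{2}$, i.e.\ $c=\tfrac{m+n}{2}-p-1$). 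So the exponent $\tfrac{m+n}{2}-2p$ in the displayed definition of $V_r(\mu)$ appears to be a typo (the two agree only when $p=1$), not a discrepancy your identification needs to explain away.
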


\smallskip

\noindent \underline{Case 1.}  We first consider the case $r = 0$,
$\frakg_1=\frakso_n(\bbC)$ and $V_r(\mu) = V_0(\mu)$.  By an
irreducibility criterion in Theorem 9.12 in \cite{Hu}, for $\mu \in
\calB$, $V_0(\mu)$ is an irreducible generalized Verma module of
$\frakso(n)$. The infinitesimal character of $V_0(\mu)$ is the same as
the infinitesimal of
$(\mu+(\frac{m-n}{2})\bfone_p)_{\rO(n)}$. Hence these
infinitesimal characters are pairwise different for different
partitions $\mu$. It follows that the filtration $F_N$ splits and
\eqref{eq21} becomes
\begin{equation}
\ind Z^\sharp = \bigoplus_{\mu \in \calB}  V_0(\mu)
\otimes (\mu_{\rU(m)}|_{\rO(m)}) \label{eq22}
\end{equation}
where the sum is taken over all partitions $\mu$ of length no more
than $\min(p,m)$. Let $\Gamma =
\Gamma_M^{K_{n,m}} = \Gamma_{\rU(p) \times \rO(n-2p)}^{\rO(n)}$ as in
Section \ref{S43}. Applying its derived functor $\Gamma^j$ to
\eqref{eq22} gives
\[
\Gamma^j(\ind Z^\sharp) = \bigoplus_{\mu \in \calB}
\Gamma^j (V_0(\mu)) \otimes (\mu_{\rU(m)}|_{\rO(m)}).
\]
where we recall $\lambda = -\frac{n+m}{2} \bfone_p$.  Since $s_0 =
\dim(\frakn_1)$ in this case, by the Borel-Weil-Bott-Kostant theorem,
$\Gamma^j(V_0(\mu)) = 0$ if $j \neq s_0$ and $\Gamma^{s_0}(V_0(\mu))
= \rd_n^p (\mu + \frac{m-n}{2} \bfone_p)_{\rO(n)}$. When $j = s_0$,
the above equation becomes
\begin{equation} \label{eq23}
A(\lambda) = \bigoplus_{\mu \in \calB}  \rd_n^p (\mu +
  \frac{m-n}{2} \bfone_p)_{\rO(n)} \otimes (\mu_{\rU(m)}|_{\rO(m)})
\end{equation}
and it has minimal $K_{n,m}$-type $\tau_{\min} = \rd_n^p
(\frac{m-n}{2} \bfone_p)_{\rO(n)} \otimes \bbC_{\rO(m)}$ which
occurs with multiplicity one.

\begin{lemma} \label{L52}
  Suppose $W_1, W_2, W_3$ are $(\frakso(n),\rU(p) \times
  \rO(n-2p))$-subquotients of $\ind Z^\sharp$ and suppose they satisfy
  an exact sequence
\[
0 \rightarrow W_1 \rightarrow W_2 \rightarrow W_3 \rightarrow 0.
\]
\begin{enumerate}[(i)]
\item Each $W_k $ is a direct summand of \eqref{eq22} and $W_k =
  \bigoplus_{\mu'} V_0(\mu')$ is a direct sum taken over certain
  partitions $\mu'$ of length no more than $\min(p,m)$.
  
\item If $j \neq s_0$, then $\Gamma^j W_k = 0$. Moreover we have an
  exact sequence
\[
0 \rightarrow \Gamma^{s_0} W_1 \rightarrow \Gamma^{s_0} W_2
\rightarrow \Gamma^{s_0} W_3 \rightarrow 0.
\]

\item Each $\Gamma^{s_0} W_j$ is $\rO(n)$-admissible.
\end{enumerate}
\end{lemma}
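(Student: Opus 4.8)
The plan is to prove Lemma \ref{L52} by reducing everything to the decomposition \eqref{eq22} of $\ind Z^\sharp$ as a direct sum of the generalized Verma modules $V_0(\mu)$, $\mu\in\calB$, which have pairwise distinct infinitesimal characters. First I would establish (i): since $\ind Z^\sharp$ decomposes in \eqref{eq22} into isotypic pieces $V_0(\mu)$ with mutually distinct infinitesimal characters, any $(\frakso(n),\rU(p)\times\rO(n-2p))$-subquotient of $\ind Z^\sharp$ splits along these generalized infinitesimal character eigenspaces. Because each $V_0(\mu)$ is irreducible (Theorem 9.12 in \cite{Hu}), the only subquotients of the summand $V_0(\mu)$ are $0$ and $V_0(\mu)$ itself. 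Hence $W_1,W_2,W_3$ are each forced to be direct sums $\bigoplus_{\mu'}V_0(\mu')$ over some subset of $\calB$; moreover, in the exact sequence $0\to W_1\to W_2\to W_3\to 0$ the index set for $W_2$ is the disjoint union of those for $W_1$ and $W_3$, since a map between two such direct sums must respect the infinitesimal character decomposition.

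For (ii), I would invoke the Borel--Weil--Bott--Kostant computation already used just before Lemma \ref{L52}: with $\Gamma=\Gamma_M^{K_{n,m}}=\Gamma_{\rU(p)\times\rO(n-2p)}^{\rO(n)}$ and $s_0=\dim(\frakn_1)=\dim(\frakn\cap\frakk)$, one has $\Gamma^j(V_0(\mu))=0$ for $j\neq s_0$ and $\Gamma^{s_0}(V_0(\mu))=\rd_n^p(\mu+\tfrac{m-n}{2}\bfone_p)_{\rO(n)}$. Since $\Gamma^j$ commutes with (possibly infinite) direct sums, part (i) immediately gives $\Gamma^j W_k=0$ for $j\neq s_0$ for every $k$. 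The vanishing of all $\Gamma^j$ for $j\neq s_0$ means that in the long exact sequence of derived functors attached to $0\to W_1\to W_2\to W_3\to 0$ all the connecting maps vanish, so $\Gamma^{s_0}$ is exact on this short exact sequence, yielding $0\to\Gamma^{s_0}W_1\to\Gamma^{s_0}W_2\to\Gamma^{s_0}W_3\to 0$. Alternatively, one sees this directly from part (i) and the explicit formula for $\Gamma^{s_0}(V_0(\mu))$.

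Part (iii) is then immediate: by (i) and (ii), $\Gamma^{s_0}W_k=\bigoplus_{\mu'}\rd_n^p(\mu'+\tfrac{m-n}{2}\bfone_p)_{\rO(n)}$, a direct sum of irreducible finite-dimensional $\rO(n)$-modules, and each $\rO(n)$-type occurs at most once (the infinitesimal characters of the $V_0(\mu')$, equivalently of the $(\mu'+\tfrac{m-n}{2}\bfone_p)_{\rO(n)}$, are pairwise distinct). Hence $\Gamma^{s_0}W_k$ is $\rO(n)$-admissible, being a multiplicity-free sum of finite-dimensional $\rO(n)$-modules.

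The main obstacle is part (i): one must be careful that a ``$(\frakso(n),\rU(p)\times\rO(n-2p))$-subquotient'' of $\ind Z^\sharp$ really is forced to be a sub-sum of the $V_0(\mu)$. This uses that $\ind Z^\sharp$ is a direct sum of modules with distinct infinitesimal characters together with the irreducibility of each $V_0(\mu)$, so that the submodule lattice of $\ind Z^\sharp$ is just the Boolean lattice of subsets of $\calB$; hence every subquotient is again of this form, and exact sequences of such subquotients are ``split at the level of index sets.'' Once this bookkeeping is in place, (ii) and (iii) follow formally from the Borel--Weil--Bott--Kostant computation already recorded in the paper.
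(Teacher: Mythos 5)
Your proposal follows exactly the route of the paper's (very terse) proof: part (i) from the semisimple decomposition \eqref{eq22} into generalized Verma modules with pairwise distinct infinitesimal characters, part (ii) from the Borel--Weil--Bott--Kostant vanishing $\Gamma^j(V_0(\mu))=0$ for $j\neq s_0$ together with the long exact sequence, and part (iii) from the explicit $\rO(n)$-decomposition of $\Gamma^{s_0}$ of the summands, i.e.\ from \eqref{eq23}. One inaccuracy should be corrected, though it does not break the argument: in \eqref{eq22} the summand $V_0(\mu)$ occurs as a $(\frakso(n),\rU(p)\times\rO(n-2p))$-module with multiplicity $\dim\bigl(\mu_{\rU(m)}\bigr)$, which is usually greater than one, so the submodule lattice of $\ind Z^\sharp$ is \emph{not} the Boolean lattice of subsets of $\calB$ (an isotypic block $V_0(\mu)^{\oplus d}$ has submodules indexed by subspaces of $\bbC^d$), and your claim in (iii) that each $\rO(n)$-type occurs at most once is false. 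What is true, and all that is needed, is that each $W_k$ is a direct sum of copies of the $V_0(\mu')$ with each $\mu'$ appearing with finite multiplicity at most $\dim\mu'_{\rU(m)}$; hence each $\rO(n)$-type of $\Gamma^{s_0}W_k$ occurs with finite multiplicity and admissibility follows, exactly as the paper gets it by viewing $\Gamma^{s_0}W_k$ as an $\rO(n)$-subquotient of the admissible module $A(\lambda)$ in \eqref{eq23}.
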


\begin{proof} 
  Part (i) follows from \eqref{eq22}.  Part (ii) follows from (i) and
  the Borel-Weil-Bott-Kostant theorem alluded above. By (ii) each
  $\Gamma^{s_0} W_j$ is an $\rO(n)$-subquotient of $A(\lambda)$ in
  \eqref{eq23} which is $\rO(n)$-admissible. This proves (iii).
\end{proof}

\medskip

\noindent \underline{Case 2.}  Now we return to the general $r$ for
$\frakg_1=\frakso(n+r,\bbC)$.  Let $V_r(\mu)$ be a subquotient in Lemma
\ref{L51}.  By Lemma \ref{L52}(i), $V_r(\mu) = \bigoplus_{\mu'}
V_0(\mu')$.  

\begin{lemma} \label{L53}
If $V_0(\mu')\subseteq V_r(\mu)$, then $\mu' = \mu + \kappa$
where $\kappa$ is a $p$-tuple of non-negative integers.
Furthermore $V_r(\mu)$ contains $V_0(\mu)$ with multiplicity one and
$\rO(r)$ acts trivially on it. 
\end{lemma}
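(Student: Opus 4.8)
The plan is to compute the restriction of $V_r(\mu)$ from $(\frakg_1,\rU(p)\times K_{n-2p,r})$ down to $(\frakso(n,\bbC),\rU(p)\times\rO(n-2p))$ directly, carrying along the commuting $\rO(r)$-action, and to match the outcome against the known description of $V_0(\mu)$ and of $\ind Z^\sharp$ from Case~1.

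First I would use the Poincar\'e--Birkhoff--Witt theorem to write $V_r(\mu)=U(\frakn_1)\otimes_\bbC W$ as a $\rU(p)\times K_{n-2p,r}$-module, where $W=\bigl(\mu+(\tfrac{m+n}{2}-2p)\bfone_p\bigr)_{\rU(p)}\otimes\bbC_{\rO(n-2p)}\otimes\bbC_{\rO(r)}$ and, under $\rU(p)$, $\frakn_1=\wedge^2\bbC^p\oplus(\bbC^p\otimes\bbC^{n-2p})\oplus(\bbC^p\otimes\bbC^r)$; write $\frakn_1^{(0)}=\wedge^2\bbC^p\oplus(\bbC^p\otimes\bbC^{n-2p})$ for the nilradical of $\frakq\cap\frakso(n,\bbC)$. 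The two facts that make the restriction tractable are: (a) $[\frakn_1^{(0)},\bbC^p\otimes\bbC^r]=0$, since $\wedge^2\bbC^p$ is central in $\frakn_1$ and the invariant form underlying $\frakso(n+r-2p,\bbC)$ pairs $\bbC^{n-2p}$ with $\bbC^r$ trivially; and (b) the subspaces $(\bbC^p)^*\otimes\bbC^r$ and $\bbC^{n-2p}\otimes\bbC^r$ of $\frakg_1$ act by zero on $W$, the first lying in $\overline{\frakn}_1$ and the second in $\frakso(n+r-2p,\bbC)$, on which $W$ is trivial. Ordering the $\bbC^p\otimes\bbC^r$-generators last in PBW, (a) and (b) show that filtering $V_r(\mu)$ by the number of $\bbC^p\otimes\bbC^r$-factors gives a filtration by $(\frakso(n,\bbC),\rU(p)\times\rO(n-2p)\times\rO(r))$-submodules whose degree-$N$ associated graded is built from copies of the generalized Verma module $V_0(\mu)$ tensored with $S^N(\bbC^p\otimes\bbC^r)$, the $\rO(r)$ acting on $S^N(\bbC^p\otimes\bbC^r)$ alone.

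Next I would invoke Cauchy's identity $S(\bbC^p\otimes\bbC^r)=\bigoplus_\nu \nu_{\rU(p)}\otimes(\nu_{\rU(r)}|_{\rO(r)})$, summed over partitions $\nu$ of length $\le\min(p,r)$, together with the Littlewood--Richardson rule for $\nu_{\rU(p)}\otimes\bigl(\mu+(\tfrac{m+n}{2}-2p)\bfone_p\bigr)_{\rU(p)}$. Since a Littlewood--Richardson coefficient $c^{\lambda}_{\alpha\beta}$ vanishes unless $\lambda\supseteq\beta$, every $\rU(p)$-highest weight produced is $\ge\mu+(\tfrac{m+n}{2}-2p)\bfone_p$ componentwise; combining this with the $r=0$ analysis (the decomposition \eqref{eq22} of $\ind Z^\sharp$ and Lemma~\ref{L52}(i)) forces every $V_0(\mu')$ that occurs in $V_r(\mu)$ to have $\mu'=\mu+\kappa$ with $\kappa$ a $p$-tuple of nonnegative integers. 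For the final sentence, on the graded level $\nu_{\rU(p)}\otimes(\mu+\cdots)_{\rU(p)}$ contains the $\rU(p)$-type $(\mu+(\tfrac{m+n}{2}-2p)\bfone_p)_{\rU(p)}$ only for $\nu=\emptyset$, with multiplicity one, in which case $\nu_{\rU(r)}|_{\rO(r)}=\bbC_{\rO(r)}$; since a filtration leaves $\rU(p)\times\rO(n-2p)\times\rO(r)$-multiplicities unchanged, and since $V_0(\mu')$ with $\mu'\supsetneq\mu$ cannot contain that $\rU(p)$-type (all its $\rU(p)$-types have strictly larger total degree), it follows that $V_0(\mu)$ occurs with multiplicity one and with $\rO(r)$ acting trivially.

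The step I expect to be the main obstacle is making the passage to the associated graded clean: the restriction of $V_r(\mu)$ to $\frakso(n,\bbC)$ is only filtered, not split, by $\bbC^p\otimes\bbC^r$-degree, and the $\overline{\frakn}_1^{(0)}$-action on the graded module differs from a naive ``$V_0(\mu)$-action $\otimes$ identity'' by lower-filtration correction terms. The remedy is to argue at the level of $\overline{\frakn}_1^{(0)}$-invariants, equivalently of the generating $\rU(p)\times\rO(n-2p)$-types of the summands $V_0(\mu')$: via the leading-term map these embed into the $\overline{\frakn}_1^{(0)}$-invariants of the associated graded, so the multiplicity bounds obtained from the Cauchy/Littlewood--Richardson computation survive, and they are realized exactly by the degree-zero layer $1\otimes W$, which visibly gives one copy of $V_0(\mu)$ with trivial $\rO(r)$-action. (Alternatively, as with Lemma~\ref{L41}, one could verify everything by a direct weight/Blattner computation, but the argument above is more transparent.)
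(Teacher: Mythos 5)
Your proof is correct and follows essentially the same route as the paper: the paper's own proof is only a sketch, saying one should set up a filtration of $(\frakso(n,\bbC),\rU(p)\times K_{n-2p})$-submodules of $V_r(\mu)$ "in the same fashion as Lemma \ref{L51}" and study the graded module (deferring details to Lemma 4.5 of \cite{LS}), which is exactly your filtration by $\bbC^p\otimes\bbC^r$-degree followed by the Cauchy/Littlewood--Richardson analysis of the associated graded. Your write-up simply supplies the details the paper omits, and your handling of the non-split filtration via the generating $\rU(p)$-types (or, equivalently, via the pairwise distinct infinitesimal characters of the $V_0(\mu')$, which in fact force the filtration to split as in \eqref{eq22}) is sound.
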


The proof consists of setting up a filtration of 
$(\frakso(n,\bbC), \rU(p) \times K_{n-2p})$-submodules of~$V_r(\mu)$ 
in the same fashion as Lemma \ref{L51} and study the graded module.  
We refer to Lemma 4.5 in \cite{LS} for a detailed proof of a similar result.

\subsection{} \label{S54} {\it Proof of Lemma~\ref{L41}.}  By Lemma
\ref{L53} above, $V_r(\mu) = \bigoplus_{\kappa} V_0(\mu + \kappa)$
where the sum~$\kappa$ is taken with multiplicities over a set of
$p$-tuples of non-negative integers. Let $\lambda = -\frac{n+m}{2}
\bfone_p$ and $\lambda_\mu = \mu + (\frac{m-n-2r}{2})
\bfone_p$. Applying $\Gamma^{s_0} = (\Gamma_{\rU(p) \times
  K_{n-2p,r}}^{K_{n,r}})^{s_0} = (\Gamma_{\rU(p) \times
  \rO(n-2p)}^{\rO(n)})^{s_0}$ to $V_r(\mu)$ gives
\begin{equation} \label{eq24}
A(\lambda_\mu) = \Gamma^{s_0}(V_r(\mu)) = \bigoplus_{\kappa}  \Gamma^{s_0}
V_0(\mu + \kappa) = \bigoplus_{\kappa}  \rd_n^p (\mu + \kappa +
\frac{m-n}{2} \bfone_p)_{\rO(n)}.
\end{equation}
It is $\rO(n)$-admissible by Lemma \ref{L52}(iii). Furthermore by the
second assertion of Lemma~\ref{L53}, $A(\lambda_\mu)$ above contains
$\rd_n^p (\mu + \frac{m-n}{2} \bfone_p)_{\rO(n)} \otimes
\bbC_{\rO(r)}$ with multiplicity one. Clearly this is the minimal
$K_{n,r}$-type of $A(\lambda_\mu)$. This proves
Lemma~\ref{L41}. \qed

\smallskip

Applying Lemma \ref{L52} to the filtration $F_N$ gives the next
lemma.

\begin{lemma} \label{L55}
In the category of $(\frakg_1,K_{n,r}) \times \rO(m-r)$-modules,
  $\Gamma^{s_0}( F_N)$ is an exhaustive increasing filtration of
  $A(\lambda)$ and
\[
\Gamma^{s_0} (F_N) / \Gamma^{s_0}(F_{N-1}) = \Gamma^{s_0}
(F_N / F_{N-1}) = \bigoplus_{\mu \in \calB(N)} A(\lambda_\mu)
\otimes (\mu_{\rU(m-r)}|_{\rO(m-r)}). \ \qed
\]
\end{lemma}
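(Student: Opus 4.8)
The plan is to apply Lemma \ref{L52} to the two families of short exact sequences attached to the filtration $\{F_N\}$, and to read off the graded pieces from Lemma \ref{L51} and \eqref{eq24}. First I would record, for each $N$, the exact sequences of $(\frakg_1,\rU(p)\times K_{n-2p,r})\times\rO(m-r)$-modules
\[
0 \to F_{N-1}\to F_N\to F_N/F_{N-1}\to 0,\qquad 0 \to F_N\to \ind Z^\sharp\to \ind Z^\sharp/F_N\to 0 .
\]
All three terms of each sequence are $(\frakso(n,\bbC),\rU(p)\times\rO(n-2p))$-subquotients of $\ind Z^\sharp$, so Lemma \ref{L52} applies with $(W_1,W_2,W_3)$ equal to those three terms. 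It gives that $\Gamma^j$ vanishes for $j\neq s_0$ on $F_N$, $F_N/F_{N-1}$ and $\ind Z^\sharp/F_N$, and hence, recalling $\Gamma^{s_0}(\ind Z^\sharp)=A(\lambda)$, the short exact sequences
\[
0 \to \Gamma^{s_0}(F_{N-1})\to \Gamma^{s_0}(F_N)\to \Gamma^{s_0}(F_N/F_{N-1})\to 0,\qquad 0 \to \Gamma^{s_0}(F_N)\to A(\lambda)\to \Gamma^{s_0}(\ind Z^\sharp/F_N)\to 0 .
\]
By naturality of the Zuckerman functor these are maps of $(\frakg_1,K_{n,r})\times\rO(m-r)$-modules, the $\rO(m-r)$-factor being inert since $\Gamma^{s_0}=(\Gamma_M^K)^{s_0}$ only involves $\rO(n)/(\rU(p)\times\rO(n-2p))$, cf. Section \ref{S43}. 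The second sequence exhibits each $\Gamma^{s_0}(F_N)$ as a submodule of $A(\lambda)$, and the first shows these submodules are nested with successive quotient $\Gamma^{s_0}(F_N)/\Gamma^{s_0}(F_{N-1})\cong\Gamma^{s_0}(F_N/F_{N-1})$, so $\{\Gamma^{s_0}(F_N)\}$ is an increasing filtration of $A(\lambda)$.

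For exhaustiveness I would use that $(\Gamma_M^K)^{s_0}$ commutes with the directed colimit $\ind Z^\sharp=\varinjlim_N F_N$: by Section \ref{S43} it equals $H^{s_0}(\frakk,M;(-)\otimes\calH(K))$, which is computed by a complex built from the finite-dimensional spaces $\wedge^\bullet(\frakk/\frakm)$, and both tensoring by the fixed module $\calH(K)$ and taking cohomology commute with filtered colimits. Hence $\varinjlim_N\Gamma^{s_0}(F_N)=\Gamma^{s_0}(\ind Z^\sharp)=A(\lambda)$, and since the transition maps are the injections found above, the colimit is $\bigcup_N\Gamma^{s_0}(F_N)$. (Alternatively, by Lemma \ref{L52}(i) every $F_N$ and $\ind Z^\sharp$ is a direct sum of the $V_0(\mu)$ in \eqref{eq22}, and $V_0(\mu)\subseteq F_N$ once $N\geq|\mu|$; applying $\Gamma^{s_0}$ then shows $\bigcup_N\Gamma^{s_0}(F_N)$ already contains all the $\rO(n)$-types of $A(\lambda)$ listed in \eqref{eq23}.)

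Finally I would compute the graded pieces. By Lemma \ref{L51}, $F_N/F_{N-1}=\bigoplus_{\mu\in\calB(N)}V_r(\mu)\otimes(\mu_{\rU(m-r)}|_{\rO(m-r)})$, a finite direct sum in which each $\mu_{\rU(m-r)}|_{\rO(m-r)}$ is a finite-dimensional multiplicity space inert for $\Gamma^{s_0}$. Since $\Gamma^{s_0}$ commutes with finite direct sums and with tensoring by such spaces, functorially in the $\rO(m-r)$-action, and since $\Gamma^{s_0}(V_r(\mu))=A(\lambda_\mu)$ by \eqref{eq24}, we obtain
\[
\Gamma^{s_0}(F_N/F_{N-1})=\bigoplus_{\mu\in\calB(N)}A(\lambda_\mu)\otimes(\mu_{\rU(m-r)}|_{\rO(m-r)}),
\]
which is the asserted formula. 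I expect the only real care to be bookkeeping: checking that Lemma \ref{L52} is genuinely applied to the subquotients $F_N/F_{N-1}$ and $\ind Z^\sharp/F_N$, not merely to the submodules $F_N$, and verifying that the $(\frakg_1,K_{n,r})\times\rO(m-r)$-structure is transported consistently through the identifications $\Gamma^{s_0}(\ind Z^\sharp)=A(\lambda)$ and $\Gamma^{s_0}(V_r(\mu))=A(\lambda_\mu)$; the homological content is supplied entirely by Lemma \ref{L52}.
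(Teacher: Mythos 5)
Your proposal is correct and follows essentially the same route as the paper, whose entire proof is the one-line remark that Lemma \ref{L52} applied to the filtration $F_N$ yields the statement; you have simply made explicit the two families of short exact sequences, the degree-$s_0$ concentration, the exhaustiveness, and the identification of the graded pieces via Lemma \ref{L51} and \eqref{eq24}.
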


\begin{proof}[Proof of Theorem \ref{T14}(i)] 
  Let $c = \frac{n+m}{2}$, $\lambda = - c \bfone_p$ and $\frakg =
  \frakso(c,\bbC)$. We will denote the $(\frakg, K_{n,m})$-module
  $\overline{A}(\lambda)$ by $\overline{A}_{n,m}(\lambda)$.  Let
  $M_{n,m} = \rU(p) \times K_{n-2p,m}$ and let $\Gamma_{n,m}^i$ denote
  the derived Zuckerman functor $(\Gamma_{M_{n,m}}^{K_{n,m}})^i$ as in
  Section~\ref{S43}.
  
  First we consider the special case when $n=2p < m=2c-2p$ and $s_0 =
  p(p-1)/2$.  Let $\ind Z^\sharp$ denote $\ind_{\bar{\frakq},\rU(p)
    \times \rO(2c-2p)}^{\frakg,\rU(p) \times \rO(2c-2p)} Z^\sharp$
  where $\frakq = (\fraku(p) \oplus \frakso(2c-2p)) \oplus \frakn$.
  In \cite{Kn}, Knapp constructs an exact sequence of $\frakg$-modules
\begin{equation} \label{eq25}
0 \rightarrow Q' \rightarrow \ind Z^\sharp \rightarrow Q \rightarrow 0.
\end{equation}
 We
check that the above extends to an exact sequence of $(\frakg, \rU(p)
\times \rO(2c-2p))$-modules. By Lemma \ref{L52}(i) $\ind
Z^\sharp$, $Q$ and $Q'$ are direct sums of $V_0(\mu)$'s. Furthermore
$Q$ contains the bottom layer $V_0(\mu = 0)$.  Lemma~\ref{L52}(ii)
gives
\begin{equation} \label{eq26}
0 \rightarrow \Gamma_{2p,2c-2p}^{s_0} Q' \rightarrow
\Gamma_{2p,2c-2p}^{s_0} \ind Z^\sharp \rightarrow
\Gamma_{2p,2c-2p}^{s_0} Q \rightarrow 0.
\end{equation}
Since $Q$ contains $V_0(0)$, $\Gamma_{2p,2c-2p}^{s_0} Q$ contains
the minimal $K_{2p,2c-2p}$-type of $A_{2p,2c-2p}(\lambda) =
\Gamma_{2p,2c-2p}^{s_0} \ind Z^\sharp$.
Hence $\overline{A}_{2p,2c-2p}(\lambda)$ is a subquotient of
$\Gamma_{2p,2c-2p}^{s_0} Q$.

Let $\Gamma_0 = \Gamma_{M_{2p,2c-2p}^0}^{K_{2p,2c-2p}^0}$,
$\Gamma_1 = \Gamma_{M_{2p,2c-2p}}^{K_{2p,2c-2p}^1}$ and $\pi_0 =
\Gamma_1^{s_0} Q$. By \eqref{eq20},
$\calF_{\frakg,K_{2p,2c-2p}^1}^{\frakg,K_{2p,2c-2p}^0} \pi_0 = \Gamma_0^{s_0}
  Q$. Knapp shows that $\pi_0$ is a unitarizable
  $(\frakg,K_{2p,2c-2p}^1)$-quotient of $\Gamma_1^{s_0} \ind Z^\sharp$
  and $\pi_0$ contains the minimal $K_{2p,2c-2p}^1$-type of
  $\Gamma_1^{s_0} \ind Z^\sharp$. Trapa proves that $\pi_0$ is an
  irreducible module \cite{T}. We remark that Knapp and Trapa uses
  $\Gamma_0$ instead of $\Gamma_1$ but their proofs could be easily
  adapted to $\Gamma_1$ with the help of \eqref{eq20}.

Let $\tilde{\pi}_0 =
{\mathrm{induced}}_{\frakg,K_{2p,2c-2p}^1}^{\frakg,K_{2p,2c-2p}}
\pi_0$. By \eqref{eq18}, $\Gamma_{2p,2c-2p}^{s_0} Q = \tilde{\pi}_0$
and $\overline{A}_{2p,2c-2p}(\lambda)$ is its $(\frakg,
K_{2n,2c-2p})$-subquotient.  Using the description of
$K_{2p,2c-2p}^0$-types of $\pi_0$ in \cite{Kn}, one checks that
$\tilde{\pi}_0$ has the same $K_{2p,2c-2p}$-types as
$\theta_p^{2p,2c-2p}(1)$. Hence $\tilde{\pi}_0 =
\theta_p^{2p,2c-2p}(1)$ by Lemma~\ref{L33}. In particular
$\tilde{\pi}_0$ is irreducible and therefore it is equal to
$\overline{A}_{2p,2c-2p}(\lambda)$. This proves that
$\overline{A}_{2p,2c-2p}(\lambda) = \theta_p^{2p,2c-2p}(1)$.
Alternatively, this also follows from \cite{PT} where Paul and Trapa
prove that $\pi_0$ is a $(\frakg,K_{2p,2c-2p}^0)$-submodule
of~$\theta_p^{2p,2c-2p}(1)$.

Now we turn to the general case, i.e. $2p < n$ and
$s_0 = p(n-2p) + \frac{p(p-1)}{2}$.  We consider~\eqref{eq25} as an
exact sequence of $(\frakg, \rU(p) \times
K_{n-2p,m})$-modules. Repeating the argument with $n$ instead of $2p$
in \eqref{eq26} gives
\[
0 \rightarrow \Gamma_{n,m}^{s_0} Q' \rightarrow \Gamma_{n,m}^{s_0}
\ind Z^\sharp \rightarrow \Gamma_{n,m}^{s_0} Q
\rightarrow 0.
\]
By Section \ref{S43}, $\Gamma_{n,m}$ is essentially the functor
$\Gamma_1 := \Gamma_{\rU(p) \times \rO(n-2p)}^{\rO(n)}$.  Let
$\Gamma_2 = \Gamma_{\rO(2p) \times \rO(n-2p)}^{\rO(n)}$ and $\Gamma_3
= \Gamma_{\rU(p) \times \rO(n-2p)}^{\rO(2p) \times \rO(n-2p)}$.  By
Proposition 6.2.17 in \cite{Vo}, we have a spectral sequence
$\Gamma_2^a \Gamma_3^b \Rightarrow \Gamma_1^{a+b}$. By Lemma
\ref{L52}(ii), $\Gamma_3^b Q = 0$ if $b \neq p(p-1)/2$. Hence the
spectral sequence gives $\Gamma_{n,m}^{s_0} Q = \Gamma_1^{s_0} Q =
\Gamma_2^{p(n-2p)} \Gamma_3^{p(p-1)/2} Q = \Gamma_2^{p(n-2p)}
\theta_p^{2p,2c-2p}(1) = \thetaone$. The last equality follows from
Theorem~\ref{T11}(ii). Hence $\thetaone$ is an irreducible quotient of
$A_{n,m}(\lambda) = \Gamma_{n,m}^{s_0} \ind Z^\sharp$ containing the
minimal $K_{n,m}$-type, i.e.  $\overline{A}_{n,m}(\lambda) =
\thetaone$.  This proves Theorem~\ref{T14}(i).
\end{proof}

\begin{proof}[Proof of Theorem \ref{T14}(ii)]
  Since $\thetaone$ is an irreducible subquotient of $A(\lambda)$, it
  follows that $\theta_p^{n,r}(L(\mu)) \otimes \mu_{\rO(m-r)}$ is an
  irreducible subquotient of $A(\lambda)$, considered as
  $(\frakso(n+r,\bbC),K_{n,r}) \times \rO(m-r)$-module. This implies
  that $\theta_p^{n,r}(L(\mu))$ is an irreducible subquotient of
  $A(\lambda_{\mu'}) = \Gamma^{s_{0}}(V_r(\mu'))$ as in \eqref{eq24}
  for some $\mu'$ such that $(\mu')_{\rU(m-r)}$ contains
  $\mu_{\rO(m-r)}$.  We recall that $\thetaone$ contains the
  $K_{n,r}$-type $W = \rd_n^p (\mu +\frac{m-n}{2}\bfone_{m})_{\rO(n)}
  \otimes \bbC_{\rO(r)} = \Gamma^{s_0}(V_0(\mu))$. Hence
  $A(\lambda_{\mu'})$ contains $W$.

We claim that $\mu = \mu'$. Indeed we embed the torus $\frakt$ of
$\rO(m-r)$ in the torus of $\rU(m-r)$ so that the restriction of
positive $\rU(m-r)$ roots to $\rO(m-r)$ remains positive. By
Lemma~\ref{L53} and \eqref{eq24}, $\mu = \mu' + \kappa = \mu' +
(\kappa_1, \ldots, \kappa_p) \in \frakt^*$ where $\kappa_i \geq 0$. On
the other hand any weight of $(\mu')_{\rU(m-r)}$ is of the form
$\mu'-(\sum \alpha)$ where $\sum \alpha$ is a sum of positive roots of
$\rU(m-r)$ with multiplicities.  Hence $\mu' + \kappa = \mu =
\mu' - (\sum \alpha)$ on $\frakt$. This forces $\kappa_1= \ldots
=\kappa_p=0$ and $\mu=\mu'$.  This proves our claim.

  From our claim $\theta_p^{n,r}(L(\mu))$ is the irreducible subquotient of
  $A(\lambda_\mu)$ generated by $K_{n,r}$-type~$W$. Since $W$ is the
  unique minimal $K_{n,r}$-type of $A(\lambda_\mu)$, $\theta_p^{n,r}(L(\mu)) =
  \overline{A}(\lambda_\mu)$.  This proves Theorem~\ref{T14}(ii).
\end{proof}


\begin{thebibliography}{99}
\bibitem[BW]{BW} A. Borel and N. Wallach, {\em Continuous cohomology,
  discrete subgroups, and representations of reductive groups,} Annals
  of Mathematics Studies, vol. 94, Princeton University Press,
  Princeton, N.J., 1980.

\bibitem[EHW]{EHW} T. Enright, R. Howe and N. Wallach, {\em A
    classification of unitary highest weight modules.}  Representation
  theory of reductive groups (Park City, Utah, 1982), 97--143, Progr.
  Math., 40, Birkh\"{a}user Boston, Boston, MA, 1983.

\bibitem[GoW]{GoW} Roe Goodman and Nolan R. Wallach, {\em
    Representations and Invariants of the Classical Groups} Cambridge
  U. Press, 1998; third corrected printing, 2003.
    
\bibitem[GW]{GW} B.H. Gross and N.R. Wallach, {\em On quaternionic
    discrete series representations, and their continuations.} J.
  Reine Angew.  Math. 481 (1996), 73-123


\bibitem[H1]{H1} R. Howe, {\em Remarks on classical invariant
      theory.} Trans. Amer. Math. Soc. {\bf 313} (1989), no. 2,
    539--570.

\bibitem[H2]{H2} R. Howe {\em Transcending classical invariant theory.}
J. AMS {\bf 2} no. 3 (1989).

\bibitem[H3]{H3} Roger Howe {\em Perspectives on invariant theory:
    Schur duality, multiplicity-free actions and beyond.}  The Schur
    lectures (1992) (Tel Aviv), 1--182, Israel Math. Conf. Proc., 8,
    Bar-Ilan Univ., Ramat Gan, 1995.

\bibitem[HZ]{HZ} C.-B. Zhu and J.-S. Huang, {\em On certain small
representations of indefinite orthogonal groups.} Representation theory
{\bf 1}, (1997), 190-206.

\bibitem[Hu]{Hu} J. E. Humphreys, {\em Representations of semisimple
    Lie algebras in the BGG category O.} AMS (2008), ISBN:
  0821846787.

\bibitem[KaV]{KaV} M. Kashiwara and M. Vergne, {\em On the
    Segal-Shale-Weil representations and harmonic polynomials}.
  Invent. Math. {\bf 44} (1978), no. 1, 1-47.

\bibitem[Kn]{Kn} A. Knapp, {\em Nilpotent orbits and some unitary
    representations of indefinite orthogonal groups.} J.
    Funct. Anal.  209 (2004), 36-100.

\bibitem[KV]{KV} A. Knapp and D. Vogan, {\em Cohomological
    induction and unitary representations}, Princeton University
  Press, New Jersey (1995).

\bibitem[Ko]{Ko} T. Kobayashi, {\em Discrete decomposability of the
    restriction of $A_{\frakq}(\lambda)$ with respect to reductive
    subgroups. III. Restriction of Harish-Chandra modules and
    associated varieties.} Invent. Math. 131 (1998), no. 2, 229-256.

\bibitem[K\O]{KO} T. Kobayashi and B. {\O}rsted, {\em Analysis of the
    minimal representation of $\rO(p,q)$ I, II, III.} Adv. Math. 180,
    No 2 (2003) 486-595. 



\bibitem[Li]{Li} J.-S. Li, {\em Singular unitary representations of
classical groups}, Invent. Math. {\bf 97} (1990), 237-255.



\bibitem[LZ]{LZ} S. T. Lee and C.-B. Zhu, {\em Degenerate principal
    series and local theta correspondence. II.} Israel J. Math. {\bf
    100} (1997), 29--59.

\bibitem[Lo]{Lo} H. Y. Loke, {\em Howe quotients of unitary
    characters and unitary lowest weight modules.} Representation
  Theory. 10 (2006), 21-47.

\bibitem[LS]{LS} H. Y. Loke and G. Savin, {\em Some unitary
    representations of orthogonal groups.}, Journal of Functional
  Analysis 255 (2008) 184-199.


\bibitem[PT]{PT} Annegret Paul and Peter Trapa. {\em Some small
    unipotent representations of indefinite orthogonal groups and the
    theta correspondence.} University of Aarhus Publication Series, 48
  (2007), 103-125.

\bibitem[Pz]{Pz} T. Przebinda, {\em The duality correspondence of
        infinitesimal characters.}  Colloq. Math. 70. (1996), no. 1,
      93--102.
      
\bibitem[Sc]{Sc} W. Schmid {\em Some Properties of Square-Integrable
Representations of Semi-simple Lie Groups}, Ann. of Math. {\bf 102} (1975),
535-564.
  
\bibitem[Ta]{Ta} U-Liang Tang, {\em The structure of Howe quotients of
    unitary lowest weight modules.} Preprint.

\bibitem[T]{T} Peter E. Trapa, {\em Some small unipotent
    representations of indefinite orthogonal groups.} Journal of
    Functional Analysis {\bf 213} (2004) 290-320.

\bibitem[Vo]{Vo} D. Vogan, {\em Representations of real reductive
      Lie groups}, Birkh\"{a}user, Boston-Basel-Stuttgart, (1981).



\bibitem[VZ]{VZ} D. A. Vogan and G. J. Zuckerman, {\em Unitary
    representations with nonzero cohomology.} Compositio Math. 53
  (1984), no. 1, 51-90.

\bibitem[W1]{Wa1} N. R. Wallach {\em Real reductive groups I.}
  Academic press, (1988).

\bibitem[W2]{Wa2} N. R. Wallach, {\em Transfer of unitary
    representations between real forms.}  Representation theory and
    analysis on homogeneous spaces (New Brunswick, NJ, 1993),
    181-216, Contemp. Math., 177, Amer. Math. Soc., Providence, RI,
    1994.

\bibitem[WZ]{WZ} N. R. Wallach and C.-B. Zhu, {\em Transfer of unitary
    representations between real forms.} Asian Jour. Math. 8, No. 4,
  (2004), 861-880.

\bibitem[Z1]{Z} C.-B. Zhu, {\em Invariant distributions of classical
groups.} Duke Math. J. {\bf 65} (1992), 85-119.

\bibitem[Z2]{Zhuscaler} C.-B. Zhu, {\em Representations with scalar
    $K$-types and applications.} Israel Jour. Math. {\bf 135} (2003), 111-124.





\end{thebibliography}
\end{document}